\documentclass[12pt,a4paper]{article}

\usepackage[utf8]{inputenc}
\usepackage[english]{babel}
\usepackage{multirow}
\usepackage[toc,page]{appendix}
\usepackage{colortbl}
\usepackage{natbib,color}
\usepackage{array, blkarray, makecell}
\usepackage[left=2.54cm,right=2.54cm,
    top=2.54cm,bottom=2.54cm,bindingoffset=0cm]{geometry}
    \usepackage{graphicx}
      \usepackage{amsmath,amssymb,euscript,amsthm,amsfonts,mathrsfs,amscd}
\usepackage{color}
\usepackage{amsthm}
\usepackage{rotating}
\newtheorem{theorem}{Theorem}

\numberwithin{equation}{section}
\usepackage[affil-it]{authblk}
\def \bv {\textbf{v}}
\def \bZ {\textbf{Z}}
\def \bz {\textbf{z}}
\def \bp {\textbf{p}}
\def \bx {\textbf{x}}
\def \bJ {\textbf{J}}
\usepackage{array, blkarray, makecell}
\newcolumntype{C}{>{\centering\arraybackslash} m{1.25cm} }  
\newcolumntype{Q}{>{\centering\arraybackslash} m{0.6cm} }  
\newcolumntype{W}{>{\centering\arraybackslash} m{1cm} }  
 \title{An information-theoretic approach for selecting arms in clinical trials}
  \author{Pavel Mozgunov and Thomas Jaki \hspace{.2cm}\\
     Department of Mathematics and Statistics, \\ Lancaster University, Lancaster, UK}
\date{}

\usepackage{algpseudocode}
\usepackage{algorithm}
\theoremstyle{plain}
\newtheorem*{theorem*}{Probability integral transform}
\newtheorem*{col*}{Corollary of the probability integral transform}
\usepackage{mathtools}

\begin{document}
\maketitle

\begin{abstract}
{The question of selecting the ``best'' amongst different choices is a common problem in statistics. In drug development, our motivating setting, the question becomes, for example: which treatment gives the best response rate. Motivated by a recent development in the theory of context-dependent information measures, we propose an experimental design based on a simple and intuitive criterion to govern arm selection in  an experiment with multinomial outcomes. The criterion leads to reliable selection of the correct arm without any parametric or monotonicity assumptions. The asymptotic properties of the design are studied for different allocation rules and the small sample size behaviour is evaluated in simulations in the context of Phase I and Phase II clinical trials.  We compare the proposed design to currently used alternatives and discuss its practical implementation. \\}

\textit{Keywords:} {Multinomial Outcomes; Dose Finding; Experimental Design; Information Gain; Weighted Differential Entropy}
\end{abstract}

\section{Introduction}
\label{sec1}

Over the past decades a variety of different methods for clinical trials aiming to select the ``optimal'' arm (e.g. dose, treatment, combination of treatments,...) have been proposed in the literature \citep{QPF90,wages2011,mams,villar2015,lee2016decision}. Given $m$ arms, the aims of Phase I and Phase II clinical trials are often to identify the target arm (TA) corresponding to the toxicity probability closest to the target $0<\gamma_t<1$ and/or the efficacy probability closest to the target $0<\gamma_e<1$. Despite the similar problem formulation for Phase I (evaluating toxicity) and Phase II (evaluating efficacy) trials, quite different approaches are generally utilized. 

In Phase I dose-escalation trials, model-based designs assuming a monotonic dose-toxicity relationship are shown to have good operating characteristics in the setting of a single cytotoxic drug \citep{iasonos2016}.
 The ability to find the TA using these methods is, however, rather limited if the assumption of monotonicity is not satisfied \citep{shen1996}. While this is not common for cytotoxic drugs, the uncertainty about toxicity and efficacy orderings holds for drugs combinations, dose-schedules and molecularly targeted drugs \citep{wages2011,lagarde2015}. 

To overcome the issue of an unknown ordering in the context of Phase I trials, some specialised approaches for combination and dose-schedule trials have been proposed \cite[e.g.][]{Thall2003,wages2011,guo2016}.
The common features of the majority of novel Phase I methods relaxing the monotonicity assumption is either relying on a parametric model or on explicit order of toxicity/efficacy. While such methods allow to borrow information across treatment arms, they might fail to find the TA if the model or ordering is misspecified. There is also growing interest in advanced trials with a large number of potential orderings where specifying all of them (or a corresponding parametric model) is not feasible \citep[see e.g.][for an example]{mozgunov2018}. In addition, more complex outcomes than a simple binary endpoint are becoming more frequent in dose-finding trials \citep[see e.g.][]{yuan2007,lee2010continual,lee2017dose} as they can carry more information about the drug's mechanism of action. Despite this, methods for studies with complex outcomes that do not require monotonicity or a complex model are sparse to date.

Thinking more broadly about selecting one or more arms during a trial (the main objective of many Phase II studies), different methods that consider arms being independent have been proposed \citep[see e.g.][]{stallard2003,koenig2008,whitehead2009,mams}.  \cite{williamson2016} have recently advocated designs maximising the expected number of responses in small populations trials. As a result, adaptive randomisation methods and optimal multi-arm Bandit (MAB) approaches are starting to be considered  more commonly in Phase II clinical trials. Although, MAB designs outperform other well-established methods of randomization (e.g. fixed randomization) in terms of expected number of successes, they can suffer low statistical power for testing comparative hypotheses. This problem corresponds to the `exploration vs exploitation` trade-off \citep{azriel2011} and some rule-based modifications have been proposed archive a better balance of the two objectives \citep[see e.g.][]{villar2015, williamson2016}. However, the majority of MAB approaches consider univariate binary response only and hone in on the arm with the largest effect by default and cannot be applied to clinical trials which aim to select the TA corresponding to the target probability $\gamma_e$ where $\gamma$ is often between $(0.7,1)$.  On the other hand, model-based alternatives suffer from the problem of model misspecification. Although, some of these challenges can be overcome by methods such as MCPmod \citep{mcpmod}, we believe that flexible alternatives that can be potentially used in the described settings are required.

Overall, the research problems described above can be considered as the general issue of correct identification of the TA whose response probability is closest to the percentile $0<\gamma<1$ or equivalently in the multidimensional case, whose characteristics are closest to the vector $\boldsymbol{\gamma} \in \mathbb{S}^d$ where $\mathbb{S}^d$ is a $d$-dimensional unit simplex defined in (\ref{simplex}). We propose a general experimental design for studies with multinomial outcomes to solve this generic problem. Based on developments in the information theory of context-dependent measures of information \citep{belis1968,kelbert2015,suhov2016}, we derive a criterion which governs arm selection in the experiment. The criterion is based on the maximisation of the information gain when considering an experiment with a particular interest in arms whose response probabilities are in the neighbourhood of $\boldsymbol{\gamma}$. Recently other designs using the information gain principle have been proposed \citep[see e.g.][]{barrett2016,kim2016}. In contrast to these methods, the proposed approach allows incorporation of the context of the outcomes (e.g. avoid high toxicity or low efficacy) in the information measures themselves. This is achieved by assigning a greater "weight" to the information obtained about arms with characteristics close to the desired level. Another difference to the majority of information-theoretic approaches is that the design is based on the so-called "patients' gain" which allocates each new patient to the treatment that is considered best while taking into account the uncertainty about the estimates for each arm. This leads to fulfilling of statistical goals of the experiment under the ethical constraints.

The proposed criterion is not restricted to a particular model and can be used, for examples, to govern selection within traditional parametric designs. However, motivated by relaxing parametric and monotonicity assumptions, we demonstrate that good operating characteristics of the design can be achieved without employing these assumptions. For the special case of a complex combination-schedule clinical trial the proposed design has already been shown to be superior compared to currently employed methods and to be practically applicable to an ongoing clinical trial \citep{mozgunov2018}. In this work, we generalise the approach for an arbitrary number of outcomes and general class of "weight" functions, study the asymptotic behaviour of the design and compare the performance to the currently used method in contexts of Phase I and Phase II trials.



The remainder of the paper is organized as follows: derivations of the criterion and assignment rules are given in Section \ref{sec:methods}. The asymptotic behaviour is studied in Section \ref{sec:asymptotics}. Section \ref{sec:examples} presents illustrative examples of the design together with a comparisons to alternative methods. We conclude with a discussion in Section \ref{sec:discussion}.

\section{Methods\label{sec:methods}}
The novel design is based on the maximisation of the information gain in the experiment with an interval of the specific interest (neighbourhood of $\boldsymbol{\gamma}$). Below, we derive an explicit formula for the information gain in the context of a trial with multinomial outcomes.

\subsection{Information-theoretic concepts}
Consider a discrete random variable taking one of $d$ values and a corresponding random probability vector $\bZ = \left[Z^{(1)},Z^{(2)},\ldots,Z^{(d)}\right] \in \mathbb{S}^d$ defined on a unit simplex 
\begin{equation}
 \mathbb{S}^d~=~\{\bZ:~Z^{(1)}~>~0,Z^{(2)}~>~0,~\ldots,Z^{(d)}~>~0;~\sum_{i=1}^d~Z^{(i)}=1\}.
\label{simplex}
\end{equation}
Assume that $\bZ$ has a prior Dirichlet distribution ${\rm Dir} (\bv + \bJ)$ where $\bv= \left[v^{(1)},\ldots,v^{(d)}\right]^{\rm T} \in \mathbb{R}_{+}^{d}$, $\sum_{i=1}^{d}v^{(i)}=\beta$ and $\bJ$ is a $d$-dimensional unit vector. After $n$ realizations of a discrete random variable in which $x^{(i)}$ outcomes of $i$ are observed, $i=1,\ldots,d$, the random vector $\bZ_n$ has a Dirichlet posterior distribution with density function
\begin{equation}
f_n(\bp|\bx) = \frac{1}{B(\bx+\bv + \bJ)}\prod_{i=1}^{d}\left(p^{(i)}\right)^{x^{(i)}+v^{(i)}}, \ B(\bx+\bv + \bJ) = \frac{\prod_{i=1}^d \Gamma (x^{(i)}+v^{(i)}+1)}{\Gamma \left(\sum_{i=1}^{d}(x^{(i)}+v^{(i)}+1) \right) }
\label{pdf}
\end{equation}
where $\bp = \left[p^{(1)},\ldots,p^{(d)} \right]^{\rm T}$, $\bx = \left[x^{(1)},\ldots,x^{(d)} \right]$, $\sum_{i=1}^{d}x^{(i)}=n$, $0<p^{(i)}<1$, $\sum_{i=1}^{d}p^{(i)}=1$ and
$B(\bx+\bv + \bJ)$
is the Beta-function and $\Gamma(x)$ is the Gamma-function.

Let $\boldsymbol{\alpha}= \left[\alpha^{(1)},\ldots,\alpha^{(d)} \right]^{\rm T} \in \mathbb{S}^d$  be the vector in the neighbourhood of which $f_n$ concentrates as $n \to \infty$. A classic question of interest in this setting is to estimate the probability vector, $\boldsymbol{\alpha}$. The information required to answer the estimation question can be measured by the Shannon differential entropy of $f_n$ \citep{cover2006} 
\begin{equation}
h(f_n) = - \int_{\mathbb{S}^d}  f_n(\bp|\bx) {\rm log} f_n(\bp|\bx) {\rm d}\bp
\label{entropy}
\end{equation}
with convention $0 \log 0 =0$. The classic formulation of the estimation question, however, does not take into account the fact that an investigator would like to find the target arm (TA) having pre-specified characteristics $\boldsymbol{\gamma}=\left[ \gamma^{(1)},\ldots,\gamma^{(d)} \right] \in \mathbb{S}^d$. It does not also reflect that one would like to have more precise estimation about the vector $\boldsymbol{\alpha}$ for those arms only which have characteristics close to $\boldsymbol{\gamma}$. This is a consequence of the fact that the classic information measures do not depend on the nature of the outcomes $\bp$, but on the probability of the corresponding event $f(\bp)$ and therefore are called context-free. While it gives the notion of information great flexibility which explains its successful application in various fields, the context-free nature might be considered as a drawback in many application areas as it would be demonstrated below.

To take into account the context of the experiment and the nature of the outcomes $\bp$, one can consider an estimation experiment with "sensitive" area (i.e. the neighbourhood of $\boldsymbol{\gamma}$). The information required in such an experiment can be measured by the \textit{weighted} Shannon differential entropy \citep{belis1968,clim2008,kelbert2015,suhov2016} of $f_n$ with a positive weight function $\phi_n(\bp)$
\begin{equation}
h^{\phi_n}(f_n) = - \int_{\mathbb{S}^d}  \phi_n (\bp) f_n(\bp|\bx) {\rm log} f_n(\bp|\bx) {\rm d}\bp.
\label{weighted_entropy}
\end{equation}
The crucial difference between the information measures given in Equation (\ref{entropy}) and Equation (\ref{weighted_entropy}) is the weight function, $\phi_n(\bp)$, which emphasizes the interest in the neighbourhood of $\boldsymbol{\gamma}$ rather than on the whole $\mathbb{S}^d$. It reflects that the information about the probability vector which lies in the neighbourhood of $\boldsymbol{\gamma}$ is more valuable in the experiment.

Due to the limited sample size in an actual studies, an investigator is typically interested in answering the question: Which arm has an associated probability vector closest to $\boldsymbol{\gamma}$ while ensuring accurate estimation of the probability vector for the TA only. For this question, the information gain from considering the experiment with sensitive area equals to
\begin{equation}
\Delta_n=h(f_n)-h^{\phi_n}(f_n).
\label{gain}
\end{equation}
Following the information gain approach, the first term in the equation above is the information in a classic experiment using context-free measure, while the second (novel) term is the information when the context of events is taken into account. Alternatively, $\Delta_n$ can be considered as an average amount of the additional statistical information required when considering the context-dependent estimation problem instead of the traditional one. 


The information gain in Equation (\ref{gain}) requires specification of a weight function which defines the "value" of the information in different areas of the simplex $\mathbb{S}^d$. To track the influence of the weight function explicitly we consider a weight function in the Dirichlet form:
\begin{equation}
\phi_n(\bp) = C(\bx,\boldsymbol{\gamma},n) \prod_{i=1}^{d}\left(p^{(i)}\right)^{\gamma^{(i)} n^{\kappa}}
\label{weight}
\end{equation}
where $\kappa \in (0,1)$ is a parameter and $C(\bx,\boldsymbol{\gamma},n)$ is a constant which is chosen to satisfy the normalization condition $\int_{\mathbb{S}^d} \phi_n (\bp) f_n(\bp|\bx){\rm d} \bp =1.$ The parameter $\kappa$ is restricted to the unit interval to ensure asymptotically unbiased estimates of the vector $\boldsymbol{\alpha}$: $ \lim_{n \to \infty} \int_{\mathbb{S}^d} \bp \phi_n(\bp)f_n(\bp) {\rm d}\bp =\boldsymbol{\alpha}$. This emphasises the interest in the identification of the TA for the small and moderate sample sizes typical for many applications. The asymptotic behaviour of the information gain, $\Delta_n$, for the family of weight functions (\ref{weight}) is studied in Theorem \ref{main_theorem}.

\begin{theorem}
Let $h(f_n)$ and $h^{\phi_n}(f_n)$ be the standard and weighted differential entropies of (\ref{pdf}) with weight function (\ref{weight}). Let $\lim_{n \to \infty}  \frac{x^{(i)}(n)}{n}=\alpha^{(i)}$ for $i=1,2,\ldots,d$ and $\sum_{i=1}^{d}x^{(i)}=n$, then
$$
\Delta_n = O \left(\frac{1}{n^{1-2\kappa}} \right)  \ {\rm as} \ n \to \infty \ {\rm if} \ \kappa<\frac{1}{2};
$$
$$
\Delta_n  = - \frac{1}{2} \left( \sum_{i=1}^{d} \frac{\left(\gamma^{(i)} \right)^2}{\alpha^{(i)}} -1 \right) n^{2 \kappa -1} + \omega(\boldsymbol{\alpha},\boldsymbol{\gamma},\kappa,n)  + O \left(\frac{1}{n^{\eta (1-\kappa)-\kappa}} \right) \ {\rm as} \ n \to \infty  \ {\rm if} \ \kappa\geq\frac{1}{2}
$$
where
$$\omega(\boldsymbol{\alpha},\boldsymbol{\gamma},\kappa,n) = \sum_{j=3}^{\eta} \frac{(-1)^{j-1}}{j}n^{j \kappa - j +1} \left( \sum_{i=1}^{d} \frac{\left(\gamma^{(i)}\right)^j}{\left(\alpha^{(i)}\right)^{j-1}} -1 \right) \ {\rm and} \ \eta=\lfloor \left({1-\kappa}\right)^{-1} \rfloor$$
\label{main_theorem}
\end{theorem}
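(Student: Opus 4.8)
The plan is to reduce $\Delta_n$ to a finite combination of digamma functions and then expand it asymptotically. First I would observe that the weight function (\ref{weight}) is conjugate to the Dirichlet density: the product $\phi_n(\bp)f_n(\bp|\bx)$ is again a Dirichlet density $g_n$, now with parameters $x^{(i)}+v^{(i)}+1+\gamma^{(i)}n^{\kappa}$, and the normalising constant is forced to be $C=B(\bx+\bv+\bJ)/B(\bx+\bv+\bJ+\boldsymbol{\gamma}n^{\kappa})$ by the stated normalisation condition. Writing $-\log f_n=\log B(\bx+\bv+\bJ)-\sum_i(x^{(i)}+v^{(i)})\log p^{(i)}$, both $h(f_n)=\mathbb{E}_{f_n}[-\log f_n]$ and $h^{\phi_n}(f_n)=\mathbb{E}_{g_n}[-\log f_n]$ contain the same constant $\log B(\bx+\bv+\bJ)$, which cancels in the difference. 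Hence $\Delta_n=\sum_i(x^{(i)}+v^{(i)})\big(\mathbb{E}_{g_n}[\log p^{(i)}]-\mathbb{E}_{f_n}[\log p^{(i)}]\big)$.

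Using the identity $\mathbb{E}_{\mathrm{Dir}(\boldsymbol{a})}[\log p^{(i)}]=\psi(a^{(i)})-\psi(\sum_k a^{(k)})$, this yields the exact closed form
$$\Delta_n=\sum_{i=1}^d(x^{(i)}+v^{(i)})\big[\psi(a^{(i)}+\gamma^{(i)}n^{\kappa})-\psi(a^{(i)})\big]-(n+\beta)\big[\psi(A+n^{\kappa})-\psi(A)\big],$$
where $a^{(i)}=x^{(i)}+v^{(i)}+1$, $A=\sum_i a^{(i)}=n+\beta+d$, and I have used $\sum_i(x^{(i)}+v^{(i)})=n+\beta$ together with $\sum_i\gamma^{(i)}=1$. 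The whole asymptotic problem is now about the difference $\psi(z+c)-\psi(z)$ with $z$ of order $n$ and $c$ of order $n^{\kappa}$.

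The second step is the expansion. I would substitute $\psi(z)=\log z-\tfrac{1}{2z}+O(z^{-2})$, so that $\psi(z+c)-\psi(z)=\log(1+c/z)+O(cz^{-2})$, and then Taylor expand the logarithm, $\log(1+c/z)=\sum_{j\ge1}\frac{(-1)^{j-1}}{j}(c/z)^j$. Applying this with $(z,c)=(a^{(i)},\gamma^{(i)}n^{\kappa})$ in the arm-specific terms and with $(z,c)=(A,n^{\kappa})$ in the global term, then collecting by powers of $n$ after replacing $a^{(i)}$ and $A$ by their leading behaviour $\alpha^{(i)}n$ and $n$ (justified by $x^{(i)}(n)/n\to\alpha^{(i)}$), the coefficient of the $j$-th order contribution becomes $\frac{(-1)^{j-1}}{j}\big(\sum_i(\gamma^{(i)})^j/(\alpha^{(i)})^{j-1}-1\big)n^{j\kappa-j+1}$. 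The crucial algebraic cancellation is that the $j=1$ contribution carries the factor $\sum_i\gamma^{(i)}-1=0$ and therefore vanishes at leading order; the $j=2$ term produces the explicit leading term, and the terms $3\le j\le\eta$ produce $\omega$. Since the exponent $j\kappa-j+1=1-j(1-\kappa)$ is strictly decreasing in $j$, truncating at $\eta=\lfloor(1-\kappa)^{-1}\rfloor$ leaves a remainder of the order of the first omitted term $j=\eta+1$, namely $O(n^{-(\eta(1-\kappa)-\kappa)})$; for $\kappa<\tfrac12$ one has $\eta=1$, so even the $j=2$ term falls into the remainder and the bound collapses to $O(n^{-(1-2\kappa)})$, exactly the first case of the statement.

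The main obstacle is not the algebra but the uniform control of the error terms. Two sources must be shown to be genuinely of lower order than the claimed remainder: the digamma correction $O(cz^{-2})$, which after multiplication by the prefactors contributes at order $n^{\kappa-1}$, and the error incurred when replacing $a^{(i)}$ by $\alpha^{(i)}n$, which is delicate because the hypothesis only supplies $x^{(i)}(n)/n\to\alpha^{(i)}$ without a rate. I would handle the latter by keeping the expansion in terms of the exact $a^{(i)}$ until the end and checking that each replacement perturbs the coefficient of $n^{j\kappa-j+1}$ only by a factor $1+o(1)$, so that the leading constant $-\tfrac12(\sum_i(\gamma^{(i)})^2/\alpha^{(i)}-1)$ is unaffected and the induced errors are $o(n^{2\kappa-1})$. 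Verifying that all these $n^{\kappa-1}$- and $o(n^{2\kappa-1})$-type corrections are dominated by $O(n^{-(\eta(1-\kappa)-\kappa)})$ across the full range $\kappa\in(0,1)$ is the part that needs the most careful bookkeeping, since near the boundary $\kappa=\tfrac12$ these corrections sit exactly at the remainder order.
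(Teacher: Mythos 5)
Your proposal is correct and follows essentially the same route as the paper's own proof: the paper likewise reduces $\Delta_n$ to the two Dirichlet log-moment integrals $\int \log (p^{(i)})^{x^{(i)}} f_n \,{\rm d}\bp$ and $\int \log (p^{(i)})^{x^{(i)}} \phi_n f_n \,{\rm d}\bp$, expresses them as digamma differences $\psi(x^{(i)}+1)-\psi(n+d)$ and $\psi(x^{(i)}+\gamma^{(i)}n^{\kappa}+1)-\psi(n+n^{\kappa}+d)$, and then invokes the digamma asymptotics and a Taylor expansion to conclude. Your write-up is in fact more explicit than the paper's (which states the result ``immediately follows''), in particular in tracking the prior terms $v^{(i)}$, the cancellation of the $j=1$ term via $\sum_i \gamma^{(i)}=1$, and the bookkeeping of the $O(n^{\kappa-1})$ corrections against the claimed remainder $O\bigl(n^{\kappa-\eta(1-\kappa)}\bigr)$.
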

\begin{proof}
The proof is given in the Appendix.
\end{proof}

The information gain, $\Delta_n$, tends to $0$ for $\kappa<1/2$ which implies that assigning a value of information with rate less than $1/2$ is insufficient to emphasize the importance of the context of the study. However, the limit is non-zero for $\kappa \geq 1/2$. Following the conventional information gain approach, one would like to make a decision which maximises the statistical information in the experiment. The information gain $\Delta_n$ is always non-positive and for any fixed $n$ its asymptotics achieves the maximum value~$0$ at the point $\alpha^{(i)}=\gamma^{(i)}$, $i=1,\ldots,d$  (all constants are cancelled out). Indeed, Theorem \ref{main_theorem} implies that when maximising the information gain $\Delta_n$, one tends to collect more information about the arm which has characteristics $\boldsymbol{\alpha}$ close to the target  $\boldsymbol{\gamma}$. To keep the trackable solution which can be easily interpreted in applications, we construct the arm selection criterion using the leading term of the asymptotic expression for $\Delta_n$ in Theorem \ref{main_theorem}:
\begin{equation}
\delta^{(\kappa)}(\boldsymbol{\alpha},\boldsymbol{\gamma}):=\frac{1}{2} \left( \sum_{i=1}^{d} \frac{\left(\gamma^{(i)}\right)^2}{\alpha^{(i)}} -1 \right)n^{2\kappa-1}.
\label{general_measure}
\end{equation}
Note that maximising the leading term of the information gain asymptotics is equivalent to minimising $\delta^{(\kappa)}(\boldsymbol{\alpha},\boldsymbol{\gamma})$. Equation (\ref{general_measure}) can be considered as the measure of the divergence between $\boldsymbol{\alpha}$ and $\boldsymbol{\gamma}$ and the criterion which governs the selection such that the information gain is maximised. The criterion (\ref{general_measure}) is intuitive as it reflects explicitly the fact that an investigator tends to collect more information about the arm with probability vector close to $\boldsymbol{\gamma}$, and also shares some desirable properties. Clearly, $\delta^{(\kappa)}(\cdot) \geq 0$ and $\delta^{(\kappa)}(\cdot)=0$ iff $\boldsymbol{\alpha}=\boldsymbol{\gamma}$ for all $\kappa$ and $n$. The boundary values $\alpha^{(i)}=0$, $i=1,\ldots,d$ correspond to infinite values of $\delta^{(\kappa)}(\boldsymbol{\alpha},\boldsymbol{\gamma})$ which is advocated by \cite{ait} as one of the important properties for functions defined on simplex~$\mathbb{S}^d$. We construct the design based on the selection criterion (\ref{general_measure}) below.

\subsection{Selection criterion}
Consider a discrete set of $m$ arms, $A_1,\ldots,A_m$, associated with probability vectors $\boldsymbol{\alpha}_1,\ldots,\boldsymbol{\alpha}_m$ and $n_1,\ldots,n_m$ observations.
Arm $A_j$ is optimal if it satisfies
$
\delta^{(\kappa)}(\boldsymbol{\alpha}_j,\boldsymbol{\gamma}) =\inf_{i=1,\ldots,m}\delta^{(\kappa)}(\boldsymbol{\alpha}_i,\boldsymbol{\gamma}).
$
To estimate $\delta^{(\kappa)}(\boldsymbol{\alpha}_i,\boldsymbol{\gamma})$  a random variable $\tilde{\delta}_{n_i}^{(\kappa)}  \equiv \delta^{(\kappa)}(\bZ_{n_i},\gamma) $ is introduced where $\bZ_{n_i}$ has the Dirichlet distribution given in (\ref{pdf}). Let us fix an arm $\bZ_{n_i} \equiv \bZ_{n}$ and denote $\tilde{\delta}_{n_i}^{(\kappa)} \equiv \tilde{\delta}_{n}^{(\kappa)}$. It is a known that a Dirichlet random variable (after appropriate normalization) weakly converges to a multivariate normal distribution. In fact, a stronger result can be shown using the Kullback-Leibler distance
$
\mathbb{D}(f\ ||\ g)= \int_{\mathbb{R}} f(x) {\rm log} \frac{f(x)}{g(x)} {\rm d}x
$
where $g$ and $f$ are probability density functions.
\begin{theorem}
\label{th_dirichlet}
Let $\widetilde{\bZ}_n = \Sigma^{-1/2}\left(\bZ_n-\boldsymbol{\alpha} \right)$ be a random variable with pdf $\widetilde{f}_n$ where pdf of $\bZ_n$ is given in (\ref{pdf}) with $\lim_{n \to \infty}  \frac{x^{(i)}(n)}{n}=\alpha^{(i)}$ for $i=1,2,\ldots,d$, $\sum_{i=1}^{d}x^{(i)}=n$ and where $\Sigma$ is a $d$-dimensional square matrix with elements $\Sigma_{[ij]} =  \frac{\alpha^{(i)} (1-\alpha^{(i)})}{n}$ if $i=j$ and $\Sigma_{[ij]} = -\frac{\alpha^{(i)} \alpha^{(j)}}{n}$ if $i \neq j$. Let $\overline{\bZ}$ be the multivariate Gaussian random variable $\mathcal{MN}\left(0,I_{d-1} \right)$ ($I_{d-1}$ is the $(d-1)$-dimensional unit matrix) with pdf $\varphi$  and the differential entropy $h(\varphi) = \frac{1}{2}{\rm log} \left(\left(2 \pi e \right)^{d-1} \right)$. Then the Kullback-Leibler divergence of $\varphi$ from $\tilde{f}_n$ tends to $0$ as $n \to \infty$
which implies that $\widetilde{\bZ}_n$ weakly converges to $\overline{\bZ}$.
\end{theorem}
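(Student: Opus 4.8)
The plan is to compute the Kullback--Leibler divergence explicitly and split it into an entropy part and a second-moment part, each of which I would show vanishes in the limit. From the definition,
\[
\mathbb{D}(\widetilde{f}_n \,\|\, \varphi) = -h(\widetilde{f}_n) - \int \widetilde{f}_n \log\varphi = \bigl(h(\varphi) - h(\widetilde{f}_n)\bigr) + \tfrac{1}{2}\bigl(\mathbb{E}\|\widetilde{\bZ}_n\|^2 - (d-1)\bigr),
\]
where I have used $-\log\varphi(\bz) = \frac{d-1}{2}\log(2\pi) + \frac{1}{2}\|\bz\|^2$ together with the stated value $h(\varphi) = \frac{1}{2}\log\bigl((2\pi e)^{d-1}\bigr)$. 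The whole argument reduces to showing that each of the two brackets tends to $0$. Before that I would make the normalisation precise: because the simplex constraint forces $\bZ_n-\boldsymbol{\alpha}$ into the hyperplane $\{\sum_i z^{(i)}=0\}$ and $\Sigma$ is singular of rank $d-1$, the symbol $\Sigma^{-1/2}$ is to be read as the symmetric pseudo-inverse square root restricted to that tangent space, so that $\widetilde{\bZ}_n$ genuinely lives in $\mathbb{R}^{d-1}$ and is comparable with $\overline{\bZ}$.

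The second-moment bracket is the routine part. Writing $\mathbb{E}\|\widetilde{\bZ}_n\|^2 = \operatorname{tr}\operatorname{Cov}(\widetilde{\bZ}_n) + \|\mathbb{E}\widetilde{\bZ}_n\|^2$ and $\operatorname{Cov}(\widetilde{\bZ}_n) = \Sigma^{-1/2}\operatorname{Cov}(\bZ_n)\Sigma^{-1/2}$, I would use the exact Dirichlet moments $\operatorname{Var}(Z_n^{(i)}) = \mu_i(1-\mu_i)/(a_0+1)$ and $\operatorname{Cov}(Z_n^{(i)},Z_n^{(j)}) = -\mu_i\mu_j/(a_0+1)$, with $\mu_i = (x^{(i)}+v^{(i)}+1)/(n+\beta+d)\to\alpha^{(i)}$ and $a_0+1\sim n$, to get $\operatorname{Cov}(\bZ_n)=\Sigma\,(1+o(1))$ entrywise and hence $\operatorname{tr}\operatorname{Cov}(\widetilde{\bZ}_n)\to d-1$. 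The mean-shift term $\|\mathbb{E}\widetilde{\bZ}_n\|^2 = \|\Sigma^{-1/2}(\mathbb{E}\bZ_n-\boldsymbol{\alpha})\|^2$ requires slightly more care, since $\Sigma^{-1/2}$ scales like $\sqrt{n}$; it vanishes once $\widetilde{\bZ}_n$ is centred at the posterior mean $\mathbb{E}\bZ_n$, which coincides with $\boldsymbol{\alpha}$ on the relevant $n^{-1/2}$ scale.

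The entropy bracket $h(\varphi)-h(\widetilde{f}_n)$ is the heart of the proof and the step I expect to be the main obstacle. By the change-of-variables rule for differential entropy on the $(d-1)$-dimensional subspace, $h(\widetilde{f}_n) = h(f_n) - \frac{1}{2}\log\bigl(n^{-(d-1)}\prod_{i=1}^{d}\alpha^{(i)}\bigr)$, where the product of the $d-1$ non-zero eigenvalues of $\Sigma$ has been evaluated by the classical identity for the multinomial covariance. For $h(f_n)$ I would use the closed form of the Dirichlet entropy,
\[
h(f_n) = \log B(\bx+\bv+\bJ) + (a_0-d)\psi(a_0) - \sum_{i=1}^d (a_i-1)\psi(a_i),
\]
with $a_i = x^{(i)}+v^{(i)}+1$, $a_0 = \sum_i a_i = n+\beta+d$, and $\psi$ the digamma function.

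The delicate calculation is then to substitute the Stirling expansion $\log\Gamma(a) = (a-\tfrac12)\log a - a + \tfrac12\log(2\pi) + O(1/a)$ and $\psi(a) = \log a - \tfrac{1}{2a} + O(1/a^2)$ into this expression, using $a_i/n\to\alpha^{(i)}$, and to verify that the divergent $\tfrac{d-1}{2}\log n$ term together with the $\tfrac{1}{2}\sum_i\log\alpha^{(i)}$ term generated by $h(f_n)$ cancel \emph{exactly} against the Jacobian contribution $\tfrac{d-1}{2}\log n - \tfrac12\sum_i\log\alpha^{(i)}$, leaving $h(\widetilde{f}_n) = \tfrac{1}{2}\log\bigl((2\pi e)^{d-1}\bigr) + o(1) = h(\varphi)+o(1)$. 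Forcing every constant to cancel while controlling the error terms uniformly is where the real work lies. Once both brackets are shown to vanish, so that $\mathbb{D}(\widetilde{f}_n\,\|\,\varphi)\to 0$, the asserted weak convergence of $\widetilde{\bZ}_n$ to $\overline{\bZ}$ follows immediately, since by Pinsker's inequality the divergence dominates the total-variation distance and thus forces convergence in distribution.
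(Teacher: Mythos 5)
Your proposal follows essentially the same route as the paper's proof: the same decomposition of $\mathbb{D}(\widetilde{f}_n\,\|\,\varphi)$ into an entropy difference plus a second-moment term, the same closed-form Dirichlet entropy with digamma/Stirling asymptotics, the same Jacobian identity $h(\widetilde{f}_n)=h(f_n)+\log\abs{\det\left(\Sigma^{-1/2}\right)}$ with $\det\Sigma$ evaluated as $n^{-(d-1)}\prod_{i=1}^{d}\alpha^{(i)}$, and the same final appeal to Pinsker's inequality to pass from vanishing divergence to weak convergence. If anything, your outline is slightly more careful than the paper's, since you explicitly flag the rank-deficiency of $\Sigma$ and the mean-shift contribution to $\mathbb{E}\|\widetilde{\bZ}_n\|^2$, both of which the paper's proof passes over silently.
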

\begin{proof}
The proof in given in the Appendix.
\end{proof}

Using Theorem \ref{th_dirichlet} the following result can be obtained for the proposed criterion.
\begin{theorem}
Under the assumptions of Theorem \ref{th_dirichlet}, let  $\tilde{\delta}_{n}^{(\kappa)}  = \delta^{(\kappa)}(\bZ_{n},\boldsymbol{\gamma})$,
$\nabla  \delta^{(\kappa)}(\bz,\boldsymbol{\gamma}) = \left[\frac{\partial \delta^{(\kappa)}(\bz,\boldsymbol{\gamma})}{\partial z^{(1)}}, \ldots, \frac{\partial \delta^{(\kappa)}(\bz,\boldsymbol{\gamma})}{\partial z^{(d)}} \right]^{\rm T}$,
$\bar{\delta}_{n}^{(\kappa)} = \bar{\Sigma}^{-1/2} \left(\delta^{(\kappa)}(\bZ_{n},\boldsymbol{\gamma}) - \delta^{(\kappa)}(\boldsymbol{\alpha},\boldsymbol{\gamma}) \right)$
where $\bar{\Sigma}= \nabla_{\boldsymbol{\alpha}}^{\rm T} \Sigma \nabla_{\boldsymbol{\alpha}}$ and $\nabla_{\boldsymbol{\alpha}} \equiv \nabla  \delta^{(\kappa)}(\bz,\boldsymbol{\gamma})$ evaluated at $\bz=\boldsymbol{\alpha}$. Let $\bar{Z}$ be a standard Gaussian RV. Then,
$\lim_{n \to \infty} \mathbb{E}\tilde{\delta}_{n}^{(\kappa)}=\delta^{(\kappa)}(\boldsymbol{\alpha},\boldsymbol{\gamma}), \ \ 
\lim_{n \to \infty} \mathbb{V}\tilde{\delta}_{n}^{(\kappa)}=0,$
and $\bar{\delta}_{n}^{(\kappa)}$ weakly convergences to $\bar{Z}$.
\label{prop}
\end{theorem}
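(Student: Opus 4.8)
The result is an instance of the multivariate delta method, with the asymptotic normality of $\bZ_n$ supplied by Theorem \ref{th_dirichlet}. Throughout I would write $\delta^{(\kappa)}(\bz,\boldsymbol{\gamma}) = n^{2\kappa-1} g(\bz)$ with $g(\bz) = \frac{1}{2}\left(\sum_{i=1}^d (\gamma^{(i)})^2/z^{(i)} - 1\right)$, which is infinitely differentiable on the open region of the simplex where every coordinate is bounded away from $0$; since $\alpha^{(i)} > 0$ for all $i$, the point $\boldsymbol{\alpha}$ lies in this region. The entries of $\Sigma$ are $O(1/n)$, so $\Sigma \to 0$, and because $\widetilde{\bZ}_n = \Sigma^{-1/2}(\bZ_n - \boldsymbol{\alpha})$ is tight (it converges weakly by Theorem \ref{th_dirichlet}) we obtain $\|\bZ_n - \boldsymbol{\alpha}\| = \|\Sigma^{1/2}\widetilde{\bZ}_n\| = O_p(n^{-1/2})$, and in particular $\bZ_n \to \boldsymbol{\alpha}$ in probability.

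For the first two assertions I would compute moments directly. Each marginal $Z_n^{(i)}$ is $\mathrm{Beta}(a_i, a_0 - a_i)$ with $a_i = x^{(i)} + v^{(i)} + 1$ and $a_0 = n + \beta + d$, so the inverse moments are available in closed form, $\mathbb{E}[1/Z_n^{(i)}] = (a_0-1)/(a_i-1)$, with $\mathbb{E}[1/(Z_n^{(i)})^2]$ and $\mathbb{E}[1/(Z_n^{(i)} Z_n^{(j)})]$ following likewise from the Dirichlet moment formula (all finite once $a_i > 2$, i.e. for large $n$). Since $\tilde{\delta}_n^{(\kappa)}$ is affine in the $1/Z_n^{(i)}$, its mean and variance are explicit; using $(a_0-1)/(a_i-1) = 1/\alpha^{(i)} + O(1/n)$ gives $\mathbb{E}\tilde{\delta}_n^{(\kappa)} - \delta^{(\kappa)}(\boldsymbol{\alpha},\boldsymbol{\gamma}) = O(n^{2\kappa-2}) \to 0$, and the variance is $\bar{\Sigma}(1+o(1))$ with $\bar{\Sigma} = \nabla_{\boldsymbol{\alpha}}^{\rm T}\Sigma\nabla_{\boldsymbol{\alpha}} = O(n^{4\kappa-3})$ (because $\nabla_{\boldsymbol{\alpha}} = O(n^{2\kappa-1})$ and $\Sigma = O(1/n)$), which vanishes in the relevant regime.

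For the weak-convergence statement I would Taylor expand about $\boldsymbol{\alpha}$, writing $\tilde{\delta}_n^{(\kappa)} - \delta^{(\kappa)}(\boldsymbol{\alpha},\boldsymbol{\gamma}) = \nabla_{\boldsymbol{\alpha}}^{\rm T}(\bZ_n - \boldsymbol{\alpha}) + R_n$ with $R_n = \frac{1}{2}(\bZ_n - \boldsymbol{\alpha})^{\rm T} H_n (\bZ_n - \boldsymbol{\alpha})$ and $H_n$ the Hessian at an intermediate point. Standardising the linear term gives $\bar{\Sigma}^{-1/2}\nabla_{\boldsymbol{\alpha}}^{\rm T}(\bZ_n - \boldsymbol{\alpha}) = \bar{\Sigma}^{-1/2}\nabla_{\boldsymbol{\alpha}}^{\rm T}\Sigma^{1/2}\widetilde{\bZ}_n$, a fixed unit-norm linear functional of $\widetilde{\bZ}_n$; by Theorem \ref{th_dirichlet} and the continuous mapping theorem it converges weakly to the same functional of $\overline{\bZ}\sim\mathcal{MN}(0,I_{d-1})$, namely $\bar{Z}\sim\mathcal{N}(0,1)$. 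It then remains to show the standardised remainder is negligible: from $\|\bZ_n - \boldsymbol{\alpha}\| = O_p(n^{-1/2})$ and $H_n = O(n^{2\kappa-1})$ one gets $R_n = O_p(n^{2\kappa-2})$, whereas $\bar{\Sigma}^{-1/2} = O(n^{3/2-2\kappa})$, so $\bar{\Sigma}^{-1/2} R_n = O_p(n^{-1/2}) \to 0$ for every $\kappa \in (0,1)$. Slutsky's theorem then delivers $\bar{\delta}_n^{(\kappa)} \Rightarrow \bar{Z}$.

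The main obstacle is twofold. First, $\Sigma$ is singular on $\mathbb{S}^d$ (the increments satisfy $\sum_i (Z_n^{(i)} - \alpha^{(i)}) = 0$), so the expansion and the scalar $\bar{\Sigma}$ must be read on the $(d-1)$-dimensional tangent space of the simplex, consistent with the $(d-1)$-dimensional limit of Theorem \ref{th_dirichlet}; one must check $\bar{\Sigma} > 0$ there so that $\bar{\Sigma}^{-1/2}$ is well defined. Second, converting the weak convergence of $\widetilde{\bZ}_n$ into control of the quadratic remainder relies on the tightness bound $\|\bZ_n - \boldsymbol{\alpha}\| = O_p(n^{-1/2})$ together with a uniform bound on $H_n$ near $\boldsymbol{\alpha}$, while the closed-form Dirichlet inverse moments supply the uniform integrability needed to upgrade the in-probability statements to the moment limits in the first two assertions.
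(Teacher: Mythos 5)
The first thing to note is that the paper contains no proof of this theorem at all: the appendix proves only Theorems 2.1, 2.2 and 3.1, and this statement is left as an implicit delta-method corollary of Theorem 2.2. Your reconstruction is therefore exactly the intended route, and most of it is sound: the exact Beta/Dirichlet inverse moments for the mean, the observation that the $n^{2\kappa-1}$ factors cancel in the standardised quantity, the remainder bound $\bar{\Sigma}^{-1/2}R_n = O_p(n^{-1/2})$, and the (correct) insistence that $\Sigma^{-1/2}$ and $\bar{\Sigma}$ must be read on the $(d-1)$-dimensional tangent space of the simplex, an abuse already present in Theorem 2.2 itself.

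There are, however, two places where your argument does not actually deliver the theorem as stated, and both deserve to be named rather than hedged. First, the variance. Your own scaling is right: $\mathbb{V}\tilde{\delta}_{n}^{(\kappa)} = \bar{\Sigma}\left(1+o(1)\right)$ with $\bar{\Sigma} = n^{4\kappa-3}\,\nabla g(\boldsymbol{\alpha})^{\rm T}\left(\mathrm{diag}(\boldsymbol{\alpha})-\boldsymbol{\alpha}\boldsymbol{\alpha}^{\rm T}\right)\nabla g(\boldsymbol{\alpha})$, and this constant is strictly positive whenever $\boldsymbol{\alpha}\neq\boldsymbol{\gamma}$. But then ``vanishes in the relevant regime'' conceals the real conclusion: $n^{4\kappa-3}\to 0$ only for $\kappa<3/4$, so the second assertion, read for all $\kappa\in(0,1)$ as the paper permits, is simply false for $\kappa\in(3/4,1)$; a complete proof must state this restriction (or the alternative hypothesis $\boldsymbol{\alpha}=\boldsymbol{\gamma}$, under which the leading term degenerates and the variance is $O(n^{4\kappa-4})$). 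Second, the positivity check $\bar{\Sigma}>0$ that you flag but leave open can be closed, and closing it exposes another unstated hypothesis: since $v^{\rm T}\left(\mathrm{diag}(\boldsymbol{\alpha})-\boldsymbol{\alpha}\boldsymbol{\alpha}^{\rm T}\right)v$ is the variance of $v_I$ with $I$ drawn from $\boldsymbol{\alpha}$, the null space of $\Sigma$ is exactly the span of the all-ones vector, while $\nabla_{\boldsymbol{\alpha}}\propto\left[(\gamma^{(1)}/\alpha^{(1)})^2,\ldots,(\gamma^{(d)}/\alpha^{(d)})^2\right]^{\rm T}$ is proportional to the all-ones vector iff $\boldsymbol{\gamma}=\boldsymbol{\alpha}$ (sum both sides over $i$). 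Hence $\bar{\Sigma}>0$ iff $\boldsymbol{\alpha}\neq\boldsymbol{\gamma}$; in the degenerate case $\boldsymbol{\alpha}=\boldsymbol{\gamma}$ the normalisation $\bar{\Sigma}^{-1/2}$ is ill-defined and the correct limit (second-order delta method) is a centred weighted chi-square, not a Gaussian, so the third assertion fails there. A minor further caveat: your $O(1/n)$ error in the mean step tacitly uses $x^{(i)} = n\alpha^{(i)}+O(1)$, whereas the paper assumes only $x^{(i)}/n\to\alpha^{(i)}$; for $\kappa>1/2$ the claim $\mathbb{E}\tilde{\delta}_{n}^{(\kappa)}-\delta^{(\kappa)}(\boldsymbol{\alpha},\boldsymbol{\gamma})\to 0$ needs the convergence rate $o(n^{1-2\kappa})$. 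In short, your method is the right one and the computations are correct, but what it proves is the theorem under the unstated hypotheses $\boldsymbol{\alpha}\neq\boldsymbol{\gamma}$ (for the first and third claims) and $\kappa<3/4$ (for the variance claim) --- which is, in fact, the most that is true.
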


A single summary statistics for $\delta^{(\kappa)}(\bZ_{n},\boldsymbol{\gamma})$ is needed to select the most promising arm in the sequential experiment. While a Bayesian estimator can be used, we will focus on the intuitively clear and simple `plug-in` estimator, $\hat{\delta}^{(\kappa)}(\hat{\bp}_{n},\gamma) \equiv \hat{\delta}^{(\kappa)}_n$  with $\hat{\bp}_{n}= [\hat{p}_n^{(1)},\ldots, \hat{p}_n^{(i)},\ldots,\hat{p}_n^{(d)}]^{\rm}$ and $\hat{p}_{n}^{(i)} = \frac{x^{(i)}+v^{(i)}}{n+\beta^{(i)}}, \ i=1,\ldots,d$, the mode of the posterior Dirichlet distribution. The estimator for the arm $A_j$ takes the form
\begin{equation}
\hat{\delta}^{(\kappa)}_{n_j}=\delta^{(\kappa)}(\hat{\bp}_{n_j},\boldsymbol{\gamma}) = \frac{1}{2} \left( \sum_{i=1}^{d} \frac{\left(\gamma^{(i)}\right)^2}{\hat{p}_{n_j}^{(i)}} -1 \right)n_j^{2\kappa-1}, \ j=1,2,\ldots,m.
\label{plugin}
\end{equation}
Note that by Theorem \ref{prop} for any $\varepsilon >0$
$
\lim_{n_j\to \infty} \mathbb{P} \left(\tilde{\delta}_{n_j}^{(\kappa)} \in [\hat{\delta}_{n_j}^{(\kappa)}-\varepsilon,\hat{\delta}_{n_j}^{(\kappa)}+\varepsilon] \right) =1.
$ The statistics (\ref{plugin}) is used to govern the selection among arms during the experiment. Note that the estimator above requires a vector of prior parameters $\bv_j$, $j=1,\ldots,m$ to start the experiment. This choice implies an initial ordering in which an investigator would like to test the arms before the data is available.



\subsection{Specific Assignment rules}

The criterion (\ref{general_measure}) summarizes the arm's characteristics  and can be applied to different types of sequential experiments. We consider two assignment rules: Rule I which randomizes between arms and Rule II which selects the ``best'' arm. These rules follow the setting of the motivating clinical trials: Rule II is widely used in Phase I trials where the randomization to all doses is not ethical or in the typical MAB setting where the primary goal is to maximize the number of successes. Note that randomization (when is ethical) allows to decrease the probability of identifying a suboptimal arm \citep{thall2007}.

\subsubsection{Rule I: Randomization\label{sec:rule1}} 
Under Rule I, the arm used next in the experiment is randomised with probabilities
$
 \tilde{w}_j\equiv \frac{1/{\tilde{\delta}_{n_j}^{(\kappa)}}}{\sum_{i=1}^m 1/{\tilde{\delta}_{n_i}^{(\kappa)}}}, \ \ j=1, \ldots, m
$
and from Theorem \ref{prop} 
$
{w}_j = \lim_{n_1,n_2,\ldots,n_m \to \infty} \mathbb{E}\left( \tilde{w}_j \right) =  \frac{1/{{\delta}^{(\kappa)}_{n_j}}}{\sum_{i=1}^m 1/{{\delta}^{(\kappa)}_{n_i}}}.
$
When no observations have yet been collected, the procedure randomizes according to the criterion based on the prior distribution alone, $\hat{\delta}^{(\kappa)}_{\beta_j}$, $j=1,\ldots,m$. Then, given $n_j$ observations, $\bx_j$ outcomes for arm $A_j$, $j=1,\ldots,m$ and using the `plug-in` estimator~(\ref{plugin}), arm $A_j$ is selected with probability $\hat{w}_j = 1$ if   $\hat{\delta}^{(\kappa)}_{n_j} = 0$ and with probability
\begin{equation} \hat{w}_j =  \frac{1/{\hat{\delta}^{(\kappa)}_{n_j}}}{\sum_{i=1}^m 1/{\hat{\delta}^{(\kappa)}_{n_i}}} \ {\rm if} \ \hat{\delta}^{(\kappa)}_{n_i} > 0, \ i=1, \ldots, m .
\label{weight00}
\end{equation}
The method proceeds until $N$ observations are attained. The arm $A_j$ satisfying
\begin{equation}
\hat{\delta}^{(1/2)}_{N_j} =\inf_{i=1,\ldots,m}\hat{\delta}^{(0.5)}_{N_i}.
\label{final}
\end{equation}
is adopted for the final recommendation, where $N_i$ is a total number of observation on an arm $A_i$, $i=1,2,\ldots,m$. The value $\kappa=0.5$ in (\ref{final}) is used  so that the final recommendation is not penalized by the sample size.

\subsubsection{Rule II: Select the best\label{sec:rule2}} 
Let $N$ be a total sample size and begin with the experiment with the arm that minimizes $\hat{\delta}^{(\kappa)}_{\beta_j}$, $j=1,\ldots,m$.  Given $n_j$ observations, $\bx_j$ outcomes for the arm $A_j$, $j=1,\ldots,m$ and using the `plug-in` estimator, an arm $A_j$ is selected if it satisfies
$\hat{\delta}^{(\kappa)}_{n_j} = \inf_{i=1,\ldots,m} \hat{\delta}^{(\kappa)}_{n_i}.$
The method proceeds until the total number of $N$ is attained. Again, we adopt $A_j$ as in (\ref{final}) for the final recommendation.

\subsection{Criterion in the context of clinical trials}
Further in the examples (Section \ref{sec:examples}) we apply the novel selection criterion to Phase I and Phase II clinical trials. In this case the arms are the different treatments (doses, combinations, schedules,...) and the goal is to find the treatment corresponding to specific toxicity (efficacy) characteristics. As the proposed information gain and corresponding selection criterion tends to assign the next patients to the best estimated TA during the trial, the criterion based on $\Delta_n$ is a \textit{patient's gain} (also known as \textit{`best intention`}) criterion as classified by \cite{whitehead1998}. The balance in the `exploration vs exploitation` trade-off is tuned by the term $n_j^{2 \kappa -1}$ reflecting the penalty on the number of observations on the same arm (i.e. many observations on one arm would favour selection of other arms). This implies that the design will keep selecting a specific arm only, if the corresponding estimate, $\boldsymbol{\alpha}$ is close $\boldsymbol{\gamma}$. As the trial progresses the design requires an increasing level of confidence that the selected arm is the TA. Clearly, $\kappa=1/2$ corresponds to no penalty and is of particular interest in trials with small sample sizes while larger values of $\kappa>1/2$ correspond to a greater interest in the statistical power of the experiment. Importantly, the first term in Equation (\ref{general_measure}) guarantees that the vast majority of patients will be assigned to the TA even for $\kappa>1/2$.

As many Phase I and Phase II clinical trials consider a binary endpoint $d=2$ (toxicity:yes/no or response:yes/no), we focus on this case in the examples. Then, (\ref{pdf}) reduces to the Beta-distribution and the proposed criterion takes the form
\begin{equation}
\hat{\delta}^{(\kappa)}_{n_j} (\alpha,\gamma) = \frac{1}{2} \frac{(\hat{p}_{n_j}^{(i)}-\gamma)^2}{\hat{p}_{n_j}^{(i)} (1-\hat{p}_{n_j}^{(i)})}n_j^{2 \kappa -1}
\label{measure}
\end{equation}
which is a normalized distance between $\hat{p}_{n_j}^{(i)}$ and $\gamma$. Note that this is not equivalent to the Euclidean distance $(\hat{p}_{n_j}^{(i)}-\gamma)^2$ which is a commonly used criterion for selection \citep[see e.g.][]{shen1996}. As the criterion tends to assign patients to the TA, it still has the Euclidean distance term in the nominator. However, it also takes into account the uncertainty in the denominator which is a variance of the probability of a binary event.
The denominator can be also considered as a penalty term which "drives away" the allocation from the bounds ($\hat{p}_{n_j}^{(i)}=0$ or $\hat{p}_{n_j}^{(i)}=1$) as the boundary values correspond to the infinite value of the criterion. Note that, as the maximum variance of the binary probability is achieved at $\hat{p}_{n_j}^{(i)}=0.5$, so the criterion favours greater values of $\hat{p}_{n_j}^{(i)}$ which can be unethical if the objective of a study is to control the risk of toxicity. We will study whether the construction of the criterion creates any practical limitation in the context of Phase I clinical trial in Section \ref{phase1}.  

\section{Asymptotic behaviour\label{sec:asymptotics}}
Considering the asymptotic behaviour of a procedure ensures that the experimental design becomes more accurate as a sample size grows \citep{azriel2011}. Recall that the goal of the sequential experiment is to find an arm $j$ which corresponds to the minimum value ${\delta}_j \equiv \delta(\boldsymbol{\alpha}_j,\boldsymbol{\gamma})$ among all arms using random variables  $\tilde{\delta}^{(\kappa)}_{i}=\delta^{(\kappa)}(\bZ_{n_i},\boldsymbol{\gamma}),\ i=1,\ldots,m$.  Denote the arm to be selected by $\nu=\arg \min_{i=1,\ldots,m} \tilde{\delta}^{(\kappa)}_{i}$. Below, we consider the risk-adjusted average approach \citep{polley2008} and the probability of correct selection (PCS) \citep{cheung2013}
$A_N= \frac{1}{m} \sum_{j=1}^{m} \mathbb{P}_{\pi_j}(\nu=j)$
where $\mathbb{P}_{\pi_j}$ is the probability computed under the vector $\pi_j = [\alpha_{1,j},\ldots, \alpha_{m,j}]^{T}$, $1 \leq j \leq m$, which assumes that $\alpha_{j,j}$ corresponds to the TA. The design is consistent if $\lim_{N \to \infty} A_N=\infty$.
The main result of this section is formulated in the following Theorem.
\begin{theorem}
Let us consider the experimental design with a selection criteria based on $\tilde{\delta}_{n_i}^{(\kappa)}$, $m$ arms and corresponding true probabilities vectors $\boldsymbol{\alpha}_i$, $i=1,\ldots,m$. Then, \\
\noindent \textbf{(a)} The design is consistent under Rule I (\ref{sec:rule1}) for $\kappa \geq 0.5$.

\noindent \textbf{(b)} The design is consistent under Rule II (\ref{sec:rule2}) for $\kappa > 0.5$.
\label{consistency_th}
\end{theorem}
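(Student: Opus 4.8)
The plan is to read ``consistency'' as $\lim_{N\to\infty}A_N=1$, i.e.\ the probability of correct selection tends to one (since $A_N\le 1$ the stated limit $\infty$ must be a typo), and to reduce both parts to one clean statement. Write $D_i:=\delta^{(1/2)}(\boldsymbol{\alpha}_i,\boldsymbol{\gamma})=\tfrac12\big(\sum_{\ell=1}^d (\gamma^{(\ell)})^2/\alpha_i^{(\ell)}-1\big)$ for the sample-size-free divergence used in the final recommendation (\ref{final}), so that by (\ref{plugin}) the running statistic factorises as $\hat{\delta}^{(\kappa)}_{n_i}=\hat{\delta}^{(1/2)}_{n_i}\,n_i^{2\kappa-1}$. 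Under $\pi_j$ the target arm is the minimiser of $i\mapsto D_i$, which I would assume unique and separated (the generic clinical situation). Since the adopted arm is $\arg\min_i \hat{\delta}^{(1/2)}_{N_i}$, it suffices to establish (I) that every arm is observed infinitely often, i.e.\ $N_i\to\infty$ as $N\to\infty$ for all $i$, and (II) that, given (I), the empirical argmin equals $j$ with probability tending to one.

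Step (II) is the routine half. Once $N_i\to\infty$, the posterior mode $\hat{\bp}_{N_i}$ is consistent for $\boldsymbol{\alpha}_i$ (by the law of large numbers with prior washout, together with the concentration in Theorems \ref{th_dirichlet} and \ref{prop}), and since $\delta^{(1/2)}(\cdot,\boldsymbol{\gamma})$ is continuous on the interior of $\mathbb{S}^d$ we get $\hat{\delta}^{(1/2)}_{N_i}\to D_i$. Strict separation of the limits then forces $\arg\min_i\hat{\delta}^{(1/2)}_{N_i}=j$ for all $N$ large enough, giving $\mathbb{P}_{\pi_j}(\nu=j)\to1$ and hence $A_N\to1$ after averaging over $j$.

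For part (a) (Rule I, $\kappa\ge 1/2$) I would prove (I) via the conditional (L\'evy) extension of the Borel--Cantelli lemma applied to the filtration generated by the allocations. The probabilities $\hat{w}_k$ in (\ref{weight00}) are strictly positive, so it is enough to show $\sum_t \hat{w}_k^{(t)}=\infty$ a.s.\ for each arm $k$. If some arm $k$ were sampled only finitely often, its statistic $\hat{\delta}^{(\kappa)}_{n_k}$ would freeze at a finite value, so $1/\hat{\delta}^{(\kappa)}_{n_k}$ stays bounded below by a positive constant, while the normalising sum $\sum_i 1/\hat{\delta}^{(\kappa)}_{n_i}$ stays finite (for $\kappa>1/2$ the penalised, heavily-sampled arms even contribute vanishing terms). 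Hence $\hat{w}_k^{(t)}$ is bounded below along the sequence, its sum diverges, and conditional Borel--Cantelli forces arm $k$ to be chosen infinitely often, a contradiction. No penalty is needed here: randomisation alone supplies exploration, which is exactly why $\kappa=1/2$ is admissible for Rule I.

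Part (b) (Rule II, $\kappa>1/2$) is where the real work lies and establishing (I) is the main obstacle, since selection is now deterministic with no inbuilt exploration; the idea is to let the penalty $n_i^{2\kappa-1}$ do the exploring. Suppose, for contradiction, the set $F$ of arms selected only finitely often is non-empty, so after some time each arm in $F$ has a frozen finite criterion. At least one arm is sampled infinitely often, and for any such arm $i$ with $D_i>0$ one has $\hat{\delta}^{(\kappa)}_{n_i}=\hat{\delta}^{(1/2)}_{n_i}\,n_i^{2\kappa-1}\to\infty$, because the first factor tends to $D_i>0$ and, crucially for $\kappa>1/2$, $n_i^{2\kappa-1}\to\infty$. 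Provided no infinitely-sampled arm has zero divergence, every infinitely-sampled criterion eventually exceeds $\max_{\ell\in F}\hat{\delta}^{(\kappa)}_{N_\ell}$, so the global minimiser must lie in $F$; but then the element of $F$ of least frozen criterion is selected again, contradicting its membership in $F$. Thus $F=\emptyset$ and $N_i\to\infty$ for all $i$. The only exceptional branch is $\boldsymbol{\alpha}_j=\boldsymbol{\gamma}$ exactly: then the target criterion tends to $0$ and may permanently dominate, so arms in $F$ are never revisited, but in that case $\hat{\delta}^{(1/2)}_{N_j}\to0$ while each neglected $\hat{\delta}^{(1/2)}_{N_k}$ stays bounded away from $0$, so the final recommendation still returns $j$. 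The strict inequality $\kappa>1/2$ is essential throughout: at $\kappa=1/2$ the penalty disappears, the criterion reduces to $\hat{\delta}^{(1/2)}_{n_i}$ with no mechanism to leave a misleadingly favourable arm, and a suboptimal arm can trap the allocation with non-vanishing probability, breaking consistency.
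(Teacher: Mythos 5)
Your proof is correct (under the same genericity assumptions---a unique, strictly separated minimiser with $D_i>0$ for suboptimal arms---that the paper also leaves implicit), but it takes a genuinely different route. You reduce both parts to (I) every arm is sampled infinitely often, plus (II) plug-in consistency and separation of the limits, proving (I) for Rule~I via conditional Borel--Cantelli on the allocation filtration and for Rule~II via the pathwise ``penalty forces return'' contradiction based on the factorisation $\hat{\delta}^{(\kappa)}_{n_i}=\hat{\delta}^{(1/2)}_{n_i}n_i^{2\kappa-1}$. The paper instead proves (a) quantitatively: it writes the correct-selection event via DeMorgan/Boole as in (\ref{prob}), invokes the asymptotic normality of $\tilde{\delta}$ from Theorem~\ref{prop} to bound the error probability by $\sum_{i\neq j}\Phi\left(c\sqrt{N}\right)$ with $c<0$ as in (\ref{asymptotics})--(\ref{limit}), taking as given that the allocation proportions under Rule~I stabilise at positive constants; this buys an explicit convergence rate (made fully explicit for binary outcomes), but the ``proportions converge'' step is asserted by appeal to Theorem~\ref{prop}, which itself presupposes $n_i\to\infty$ for every arm---precisely the gap your Borel--Cantelli argument closes rigorously (do note, though, that your claim that the normalising sum in (\ref{weight00}) ``stays finite'' needs a word when some sampled arm has $D_i=0$, since then $1/\hat{\delta}^{(1/2)}_{n_i}$ diverges; the weights then decay like $1/t$, whose sum still diverges, so the conclusion survives). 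For (b) the paper argues via expected counts ($\mathbb{E}\,n_j(t)$ diverges because the penalised criterion of the currently favoured arm grows polynomially in its sample size) before reusing the argument of (a); your deterministic freeze-and-exceed contradiction is cleaner and, unlike the paper, explicitly covers the boundary case $\boldsymbol{\alpha}_j=\boldsymbol{\gamma}$. The paper additionally documents inconsistency at $\kappa=1/2$ under Rule~II with an explicit two-arm prior lock-in example, which you only sketch in your closing sentence. Finally, your reading of consistency as $A_N\to 1$ is right: the paper's own proof concludes $\lim_{N\to\infty}A_N\geq 1$, so the ``$=\infty$'' in the definition is indeed a typo.
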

\begin{proof}
The proof is given in the Appendix.
\end{proof}

\section{Examples\label{sec:examples}}
In this section the performance of the proposed experimental design is studied in the context of Phase I and Phase II clinical trials under different assignment rules. We will refer to our proposal as the Weighted Entropy (WE) design (WE${\rm _I}$ under Rule I and WE${\rm _{II}}$ under Rule II) and compare its performance to several well-established alternative approaches. All computations have been conducted using R \citep{Rcore}.

\subsection{Phase II clinical trial}
\subsubsection{Setting}
Consider a Phase II clinical trial whose primary endpoint is a binary measure of efficacy (e.g. response to treatment). The goals of the study are (i) to find the most effective treatment and (ii) to treat as many patients as possible on the optimal treatment. Clearly, Rule I is preferable for the first goal and Rule II for the second one. We consider two hypothetical trials, each with $m=4$ treatments, investigated by \cite{villar2015} for Multi-Arm Bandit models (MAB). We compare the performance of the proposed approach to the MAB approach based on the Gittins index \citep{gittins1979}, which is the optimal design in terms of maximising expected number of successes (ENS), and to fixed and equal randomization (FR) which is best in terms of the statistical power.

Trial 1 investigates $N_1=423$ and the true efficacy probabilities are $(0.3,0.3,0.3,0.5)$ while Trial 2 considers $N_2=80$ and the scenario $(0.3,0.4,0.5,0.6)$. Following the original application we consider the hypothesis $H_0: \ p_0 \geq p_i$ for $i=1,2,3$ with the family-wise error rate calculated at $p_0=\ldots=p_3=0.3$, where $p_0$ corresponds to the control treatment efficacy probability.  The Dunnett test \citep{dunnett1984} is used for the hypothesis testing in the FR setting. The hypothesis testing for MAB and WE design is performed using an adjusted Fisher's exact test \citep{agresti1992}. The adjustment chooses the cutoff values to achieve the same type-I error as the FR. The Bonferroni correction is used for MAB and WE designs to correct for the multiple testing and the family-wise error rate is set to be less or equal to 5\%.  Characteristics of interest are (i) the type-I error rate ($\alpha)$, (ii) statistical power $(1-\eta)$, (iii) the expected number of successes (ENS) and (iv) the average proportion of patients on the optimal treatment ($p^*$).

The WE design requires specification of the target value which can be specified in many clinical trial by clinicians. This can be defined as the maximum efficacy that they expect to see for the particular diseases. Below we consider the most challenging setting in which no target value is specified and the goal is to simply maximise the number of successes (as in MAB). To achieve this we set the target probability to a value close to 1. The target $\gamma=0.999$ is used which corresponds to the aim "to find the arm with the highest efficacy probability". The vector of the prior mode probabilities $p^{(0)}=[0.99,0.99,0.99,0.99]^{\rm T}$ is chosen to reflect no prior knowledge about which arm has the highest success probability and that each treatment is considered as highly efficacious until data suggests otherwise. This choice of prior  reflects the equipoise principle \citep{djulbegovic2000uncertainty}. We choose $\beta_0=5$ to ensure enough observations on the control arm and $\beta_1=\beta_2=\beta_3=2$ to reflect no prior knowledge for competing arms. We fix $\kappa=0.5$ for WE${\rm _I}$ and use different values of $\kappa$ for WE${\rm _{II}}$.

\subsubsection{Results}
The trade-off between the expected number of successes (ENS) and the statistical power for different values of the penalty parameter $\kappa$ under Rule II is illustrated in Figure \ref{fig:mab}.
\begin{figure}[!h]
  \centering
 \includegraphics[width=1\textwidth]{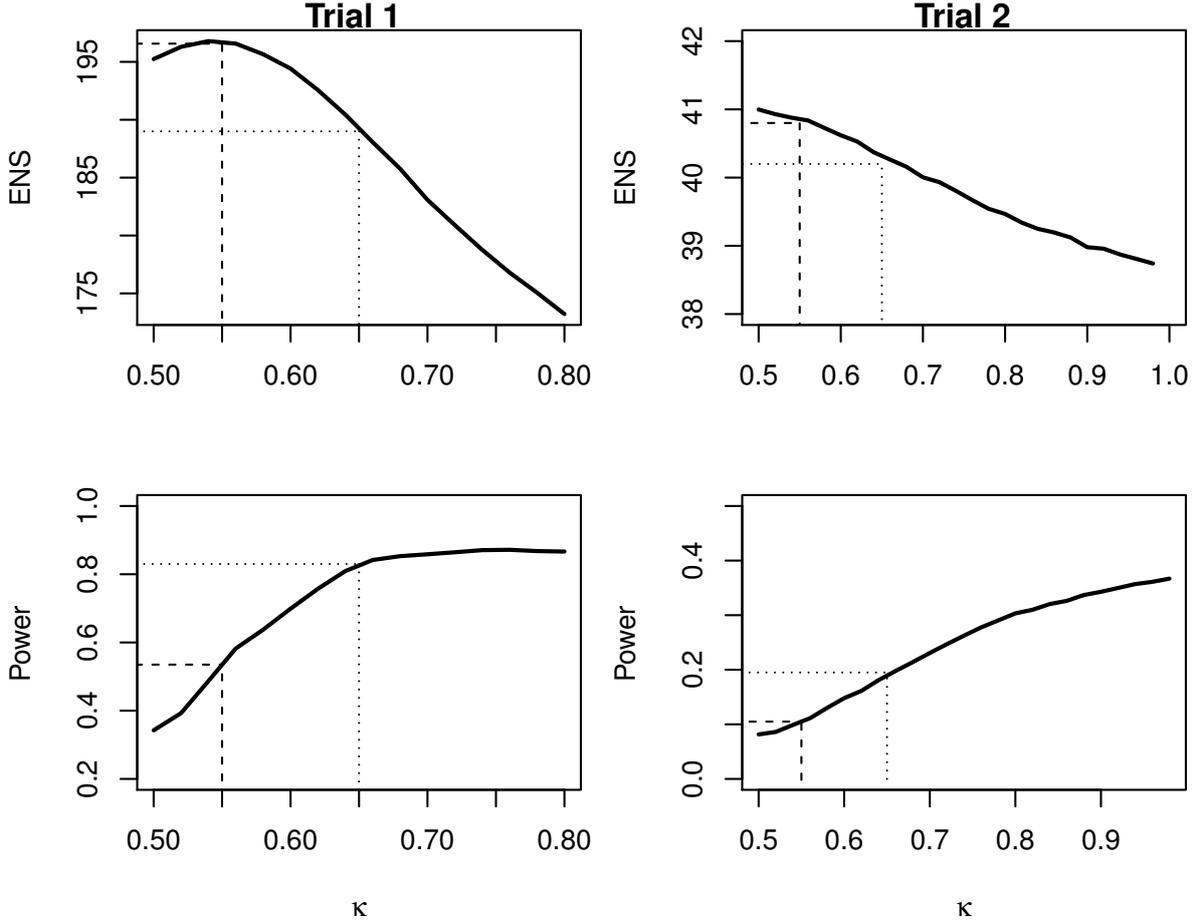}
      \caption{\small{ENS and power (fixed cutoff value) for the WE design under the Rule II for different $\kappa$. Dashed lines correspond to the values chosen for the subsequent study. Results are based on $10^4$ replications}}
      \label{fig:mab}
\end{figure}
In both trials, greater values of $\kappa$ correspond to greater power and lower ENS as the increase in penalty tends to more diverse allocations. The exception is $\kappa \in (0.5,0.55)$ in Trial 1 where the inconsistency for $\kappa=0.5$  leads to locking-in on the suboptimal treatment. We choose two values of $\kappa$ for the subsequent comparison. These choices correspond to (i) high ENS, but unacceptable power (dashed line) and (ii) slightly reduced ENS, but higher power (dotted line). 

The operating characteristics of considered designs in Trial 1 are given in Table \ref{tab:trial1}.
 \begin{table}[ht!]
  \centering
    \caption{\label{tab:trial1} Operating characteristics of the WE design under the Rule I ({\rm WE}${\rm _{I}}$), under the Rule II ({\rm WE}${\rm _{II}}$) for different $\kappa$ (in brackets), {\rm MAB} design and {\rm FR} in Trial 1 with $N=423$ under the null and alternative hypothesises. Results are based on $10^4$ replication.}
    \begin{tabular}{cccccccc}
 \multirow{2}{*}{Method} & \multicolumn{3}{@{}c}{$H_0:p_0=p_1=p_2=p_3=0.3$ } & & \multicolumn{3}{@{}c}{ $H_1:p_0=p_1=p_2=0.3, p_3=0.5$}   \\
 \cline{2-4}  \cline{6-8}  
 & $\alpha$ & $p^*(s.e)$ & ENS(s.e.) & & $(1-\eta)$ & $p^*(s.e.)$ & ENS (s.e.) \\
 \hline 
  MAB & 0.05 & 0.25 (0.18) & 126.68 (9.4) & & 0.43 & 0.83 (0.10) & 198.25 (13.7) \\
 FR & 0.05 & 0.25 (0.02)  & 126.91 (9.4) & & 0.82 & 0.25 (0.02) & 147.91 (9.6) \\
   WE${\rm _{I}}$ ($0.50$) & 0.05 & 0.24 (0.05)  & 126.84 (9.5) & & 0.88  & 0.39 (0.06) & 159.90 (11.0) \\
  WE${\rm _{II}}$ ($0.55$) & 0.05 & 0.21 (0.20) & 126.89 (9.4) & &0.55 & 0.83 (0.18)  &  197.13 (17.8) \\
   WE${\rm _{II}}$ ($0.65$) & 0.05  & 0.23 (0.13) & 126.86 (9.4) &  &0.87& 0.74 (0.10) &  189.26 (13.7) \\
      \end{tabular}
      \end{table}
Under the null hypothesis, the performance of all methods is similar and the type-I error is controlled. Under the alternative hypotheses, the WE${\rm _{II}}$ design with $\kappa=0.55$ performs comparably to the MAB in terms of the ENS, but yields almost 10\% points increase in power. Nevertheless, it has unacceptable low statistical power which can be increased by using higher values of the penalty parameter ($\kappa=0.65$). It leads to an increase in the power from $0.53$ to $0.86$ at the cost of the slight ($\approx 4\%$) decrease in the ENS. In fact WE${\rm _{II}}$ then has comparable power to the FR, while treating almost 40 more patients on the superior treatment. Another way to increase the statistical power is to use WE${\rm _{I}}$ for which both the associated power and the ENS is higher than for the FR.

The operating characteristics of the designs for Trial 2 with fewer patients and a linear increasing trend is given in the Table \ref{tab:trial2}. 
      \begin{table}[ht!]
  \centering
    \caption{ \label{tab:trial2} Operating characteristics of the WE design under the Rule I (WE$_{\rm{I}}$), under the Rule II (WE$_{\rm{II}}$) for different $\kappa$ (in brackets), MAB design and FR in Trial 2 with $N=80$ under the null and alternative hypothesises. Results are based on $10^4$ replication.}
  \begin{tabular}{cccccccc}
 \multirow{2}{*}{Method} & \multicolumn{3}{@{}c}{$H_0:p_0=p_1=p_2=p_3=0.3$ } & & \multicolumn{3}{@{}c}{ $H_1:p_i=0.3+0.1i,i=0,1,2,3$}   \\
 \cline{2-4}  \cline{6-8}  
 & $\alpha$ & $p^*(s.e)$ & ENS(s.e.) & & $(1-\eta)$ & $p^*(s.e.)$ & ENS (s.e.) \\
 \hline 
  MAB & 0.00 & 0.25 (0.13) & 23.97 (4.10) & & 0.01 & 0.49  (0.21) & 41.60 (5.4) \\
   FR  & 0.05 & 0.25 (0.04) & 24.02 (4.10) & & 0.50 & 0.25 (0.04) & 35.98 (4.3) \\
   WE${\rm _{I}}$ ($0.50$) & 0.05  & 0.23 (0.07) & 23.92 (4.11) & & 0.59  &  0.33 (0.10) & 37.55 (4.7)  \\
   WE${\rm _{II}}$ ($0.55$) & 0.01 & 0.20 (0.15)  & 24.01 (4.10) & & 0.11 & 0.50 (0.27) & 40.72 (5.9)  \\
      WE${\rm _{II}}$ ($0.65$) & 0.05 & 0.22 (0.12)  & 23.96 (4.08) & & 0.52 & 0.47 (0.21) & 40.19 (5.4)  \\
      \end{tabular}
      \end{table}
Under the null hypothesis, all designs perform similarly and type-I errors are controlled at the 5\% level. Under the alternative hypothesis, the MAB and WE${\rm _{II}}$ with $\kappa=0.55$, again, yield the highest (and similar) ENS among all alternatives, but also low statistical power. The WE${\rm _{I}}$ or increased $\kappa$ for WE${\rm _{II}}$ result  in a considerable power increase. Both designs have a greater (or similar) power and result in more ENS than the FR.

Overall, WE designs can perform comparably to the optimal MAB design in terms of the ENS, but with greater statistical power for both large and small sample sizes. They have similar statistical power to the FR, but with the considerably greater ENS. The ENS and power trade-off can be tuned via the built-in parameter $\kappa$. Although, some modification to the MAB designs were proposed \cite[e.g. see ][]{villar2015} to prevent the low statistical power, the majority of those are ruled-based. The proposed approach allows to avoid any algorithm-based rules and keeps the procedure fully adaptive. Additionally, the computation of the Gittens index for the MAB design is not trivial, requires special attention and is widely discussed in the literature \citep[e.g. see][and reference there in]{villar2015}. Some of them require calibration and can be computationally intensive. In contrast, the proposed criterion is extremely simple and easy to compute. While the proposed designs are compared for the target $\gamma=0.999$, similar performance is obtained for the problem of seeking an arm associated with a given response probability ($\gamma \in (0.7,1)$). This is, for example, of interest when seeking the effective dose 80 (ED80), the dose for which 80\% of subjects respond to the treatment. 

\subsection{Phase I clinical trial \label{phase1}}
\subsubsection{Setting}
To study the WE design in the context of Phase I clinical trials, let us consider $m=7$ arms, $N=20$ patients and the arm selection allowed after each patient. The goal is to find the arm (which could be combination, schedule or combination-schedule) with a toxicity probability closest to $\gamma=0.25$. In these studies randomization to all arms is not ethical for safety reasons and therefore, Rule II and $\kappa=0.5$ are used.
 We would like to emphasize that we do not consider the classic dose-escalation problem in which the doses can be put according to increasing toxicity and focus on the setting in which clinicians cannot put arms according to increasing toxicity (as e.g. often in scheduling trials). While clinicians will be able to provide a presumed ordering of the arms, this order might be misspecified. 

We consider scenarios in which the prior order chosen by clinicians is either correct or misspecified. The scenarios with correctly specified ordering have a monotonic arm-toxicity relationship and the scenarios with misspecified ordering have a non-monotonic relationship. The investigated scenarios are shown in Figure \ref{shapes} and include a variety of monotonic and non-monotonic shapes as well as one setting with highly toxic arms only.
\begin{figure}[!ht]
  \centering
\makebox{ \includegraphics[width=1\textwidth]{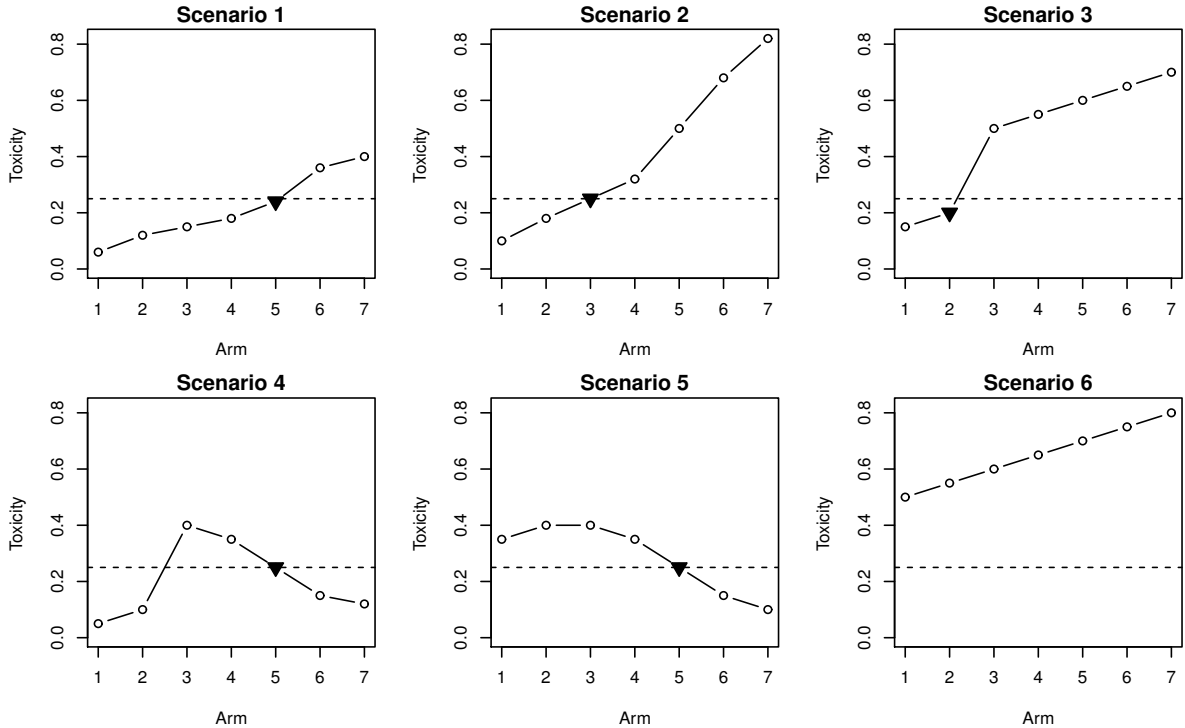}}
      \caption{Toxicity scenarios. The optimal arm is marked by a triangle and the maximum tolerated toxicity $\gamma=0.25$ is marked by dashed horizontal line. The monotonic scenarios (1-3, 6) correspond to correctly prespecified ordering of arms according to increasing toxicity and non-monotonic scenarios (4-5) to misspecified ordering of arms. \label{shapes}}
\end{figure}

It is assumed that limited information about treatments is available and a linear increase in the toxicity probabilities is expected such that $\hat{p}_{\beta_1}<\hat{p}_{\beta_2}<\ldots<\hat{p}_{\beta_7}$. For safety reasons, the trial is required to start at $d_1$. An `operational` prior, that is a prior that gives good operating characteristics under different scenarios, is calibrated. Details on the calibration are given in the Appendix. The resulting prior uses $\beta=1$ and prior toxicity risk modes of $\hat{\textbf{p}} = [0.25,0.3,0.35,0.4,0.45,0.5,0.55]^{\rm T}$. For the simulation study below, the penalty parameter is fixed at $\kappa=0.5$ due to the small sample size. 
 
The WE design is compared to common Phase I designs. Specifically the Bayesian Continual Reassessment Method (CRM) \citep{QPF90} and Escalation with Overdose Control (EWOC) \citep{EWOC}  are considered as methods that assume a monotonic toxicity relationship. Additionally, the partial ordering CRM  \citep[POCRM,][]{wages2011} which relaxes the monotonicity assumption is also considered. It uses the original CRM design with additional randomization among a pre-specified set of orderings. In our comparison we consider only correct orderings (Table \ref{tab:orders}) to allow for the best possible performance of the POCRM under the evaluated scenarios.
    \begin{table}[!ht]
 \caption{\label{tab:orders}Orderings for POCRM.}
 \centering
  \begin{tabular}{cc}
 & Order \\
\hline
1 & (1,2,3,4,5,6,7) \\
2 & (1,2,7,6,5,4,3) \\
3 & (7,6,5,4,1,2,3) \\
\hline
\end{tabular}
\end{table}
The same prior toxicity probabilities $\hat{\textbf{p}}$  and a rough prior distribution of the model parameters were chosen for the model-based alternatives. Finally, we include the non-parametric optimal benchmark \citep{benchmark} which provides the best theoretical performance if the patients' complete toxicity profiles are known.

The main characteristics to consider are: (i) the proportional of correct selections and (ii) the average number of toxic responses.  The \texttt{bcrm} package \citep{BCRM} is used for CRM and EWOC and the \texttt{pocrm} package is used for POCRM. For the proposed method and the non-parametric optimal design one-million-fold simulations are used while 100,000-fold simulations are used for the model-based methods due to computational constraints. 

\subsubsection{Safety constraint} 
For ethical reasons an escalation procedure should be planned so that only few patients are assigned to highly toxic treatments which is typically achieved by the use of a safety constraint. The majority of existing safety constraints are based on the assumption of monotonicity and hence are not suitable for the proposed design. We adopt the following safety constraint instead. The treatment $A_j$ is safe if after $n$ patients 
$
       \int_{\gamma^*}^{1}f_{n_{j}}(p) {\rm d}p \leq \theta_{n_j}
$
where $\gamma^*$ is an upper toxicity threshold, $\theta_{n_j}$ controls the overdosing probability and $f_{n_{j}}$ is the posterior Beta distribution for the toxicity probability. Note that the overdosing threshold $\theta_{n_j}$ changes as the trial progresses. It should be a decreasing function of $n$ with $\theta_0=1$ to give a possibility to test all the treatments (if data suggests so) and $\theta_{final} \leq 0.3$ to ensure that the final recommendation is safe. As an illustration, the linear non-increasing
$\theta_n=\max(1-rn,\theta_{final})$
is used with $r>0$. We have calibrated the parameters of the safety constraints (details in the Appendix) and used $\gamma^*=0.45$ and $r=0.035$ in the simulations. Similar safety constraints were incorporated in the model-based methods.

\subsubsection{Results}
The simulation results in monotonic scenarios 1-3 are given in Table \ref{safcon1}.
       \begin{table}[!ht]
\caption{The operating characteristics of the WE, CRM, POCRM and EWOC designs. `Term`, `T` and $\bar{N}$ correspond to the termination proportion, the average number of toxic responses and the average number of patients, respectively. The most likely recommendation is in bold, the actual target regimen is in italics. \label{safcon1}}
  \small
  \centering
   \begin{tabular} {p{1.25cm}WWWWWWWQQQ}
    \hline
     & $d_1$ & $d_2$ & $d_3$ & $d_4$ & $d_5$ & $d_6$ & $d_7$ & Term & T  & $\bar{N} $ \\ 
    \hline
    \multicolumn{11}{@{}c}{{Scenario 1. Linear response}}\\
    Scenario & 0.06 & 0.12 & 0.15 & 0.18 & \textit{0.24} & 0.36 & 0.40 & & &  \\ 
                Optimal & 0.92 & 9.12 & 10.60 & 14.44 & \textbf{31.54} & 27.50 & 5.87 &  \\ 
       WE& 7.03 & 14.72 & 23.33 & \textbf{30.11} & 23.34 & 1.39 & 0.05 & 0.1 & 3.36 & 20.0 \\ 

          CRM & 2.66 & 7.21 & 14.17 & 20.58 & \textit{\textbf{26.62}} & 15.95 & 12.53 & 0.3 & 4.38 & 19.9\\ 
           POCRM  &2.69 & 11.25 & 22.30 & 15.73 & \textit{\textbf{22.60}}  & 20.62  & 4.60 & 0.2 &  4.94 & 20.0 \\ 
           EWOC  &  8.06 & 13.80 & 20.30 & {\textbf{23.70}} & \textit{20.20} & 9.62 & 3.88 & 0.4 & 3.76 &  18.8\\ 
    \hline
     \multicolumn{11}{@{}c}{{Scenario 2. Logistic shape}}\\
        Scenario & 0.10 & 0.18 & \textit{0.25} & 0.32 & 0.50 & 0.68 & 0.82 & & & \\ 
                    Optimal & 6.05 & 29.03 & \textbf{30.12} & 28.27 & 6.48 & 0.05 & 0.00 & \\  
     WE & 16.78 & 26.43 & \textit{\textbf{29.54}}& 22.51 & 3.76 & 0.11 & 0.00 & 0.9 & 5.23 & 20.0 \\      
                   CRM  & 17.24 & 25.88 & \textit{\textbf{28.70}} & 19.37 & 6.24 & 0.56 & 0.04 & 1.9  & 4.84 & 19.7 \\ 
                    POCRM  &  14.98& 27.32 & \textit{\textbf{27.89}} & 18.50  & 6.70 & 1.04 & 1.86 & 1.7  & 5.54 & 20.0 \\ 
                     EWOC   & \textbf{28.72} & \textbf{27.66} & \textit{24.32} & 13.65 & 4.21 & 0.00 & 0.00 & 1.4 & 3.27  & 18.0 \\
               \hline
    \multicolumn{11}{@{}c}{{Scenario 3. J shape}}\\
              Scenario & 0.15 & \textit{0.20}  & 0.50 & 0.55 & 0.60 & 0.65 & 0.70 & & &  \\ 
Optimal & 29.87	& 58.31& 	10.0	2 & 1.69& 	0.11& 	0.00	& 0.00& \\	
     WE & 38.07 & \textit{\textbf{44.65}} & 6.59 & 3.44 & 1.48 & 0.28 & 0.02 & 5.5 & 5.94 & 19.8  \\ 
              CRM & \textbf{37.47} &\textit{\textbf{37.85}} & 17.41 & 2.92 & 0.36 & 0.07& 0.00 & 3.9 & 5.10 & 19.4 \\ 
     POCRM  & \textbf{33.57} & \textit{\textbf{37.76}} & 13.27 & 2.55 & 0.54 & 1.33  & 6.04  & 4.9 & 6.06  & 19.8 \\ 
       EWOC  & \textbf{51.00} & \textit{26.11} & 11.01 & 0.88 & 0.13 & 0.00 & 0.00 & 10.9 & 3.60 & 16.8 \\ 
             \hline
  \end{tabular}
\end{table}
The WE design performs comparably to the CRM and POCRM designs and recommends the correct treatment with the probability nearly $0.25$ and $0.30$ in scenario 1 and 2. In scenario 1 the WE design underestimates the target treatment and recommends a less toxic treatment more often due to the safety constraint. Despite that, the performance of all methods is not far from the non-parametric optimal benchmark which shows that the detection of the target treatment is quite challenging. Proportions of terminations are close to $0$ and the average number of toxic responses is largely the same. For the EWOC, the level of the target treatment is underestimated in both scenarios. 

In scenario 3, the WE design shows a better performance than the model-based alternatives with nearly 45\% of correct recommendations against about 40\% for CRM and POCRM. The safety constraint allows to prevent the recommendation of highly toxic treatments and controls the total number of toxic responses. Again, the EWOC underestimates the target therapy, but results only in $3$ toxic responses compared to $5$ for the CRM and $6$ toxicities for the WE and the POCRM. As expected, methods that relax monotonic assumption result in more toxic responses that monotonicity based designs.

The results for non-monotonic (Scenarios 4-5) and unsafe (Scenario 6) cases are given in Table \ref{safcon2}.
 \begin{table}[!ht]
\caption{The operating characteristics of the WE, CRM, POCRM and EWOC designs. `Term`, `T` and $\bar{N}$ correspond to the termination proportion, the average number of toxic responses and the average number of patients, respectively. The most likely recommendation is in bold, the actual target regimen is in intalics. \label{safcon2}}
 \small
  \centering
   \begin{tabular} {p{1.15cm}WWWWWWWQQp{0.55cm}}
    \hline
     & $d_1$ & $d_2$ & $d_3$ & $d_4$ & $d_5$ & $d_6$ & $d_7$ & Term & T  & $\bar{N} $ \\ 
    \hline
                \multicolumn{11}{@{}c}{{Scenario 4. Inverted-U shape}}\\
         Scenario & 0.05 & 0.10 & 0.40 & 0.35 & \textit{0.25} & 0.15 & 0.12 &  & &\\  
             Optimal & 0.88 & 7.36 & 19.12 & 18.96 & \textbf{38.47} & 13.64 & 1.57 & \\ 
          WE & 14.11  &19.13 & 11.77 & 18.27 & \textit{\textbf{27.90}} & 8.50 & 0.23 & 0.1& 4.26 & 20.0 \\      
             CRM & 4.26 & \textbf{19.90} & 17.70  & 6.31  &\textit{ 2.84}  & 3.00  & \textbf{46.10}  & 0.3 & 3.26  & 19.9 \\ 
             POCRM  & 2.87 & 11.39  & 11.75  & 9.32  & \textit{19.11}  & \textbf{33.94} & 11.62 & 0.2  &  4.29  & 20.0 \\ 
           EWOC & 7.18 & 24.90 & 18.60  & 3.79  &\textit{ 2.52}  & 3.79  & 30.60  & 6.6& 2.73 & 18.9 \\ 
                     \hline
               \multicolumn{11}{@{}c}{{Scenario 5. Inverted-U shape}}\\
                              Scenario & 0.35 & 0.40 & 0.40 & 0.35 & \textit{0.25} & 0.15 & 0.10 &  & & \\ 
                              Optimal  & 16.18 & 3.01 & 3.01 & 16.18 & \textbf{39.46} & 18.65 & 3.51 &  \\ 

          WE& 15.57 &12.65 & 13.31 & 18.27  & \textit{\textbf{27.92}}  & 8.90  &0.58  &9.9 & 5.81  & 19.7 \\     
             CRM & \textbf{47.41} & 2.51 & 0.97 & 0.48  & \textit{0.72}  & 0.40  & \textbf{30.10}  & 27.3 & 4.27 & 16.0 \\ 
              POCRM & 16.81 & 5.98  & 5.66  & 12.42  & \textit{20.10} & \textbf{23.13} & 10.23 & 9.7 &  5.14  & 19.5 \\ 
           EWOC  & \textbf{30.75} & 1.26 & 0.78  & {0.47}  &\textit{0.47 } & 0.31  & 9.78 & \textbf{56.2} & 3.30 &  11.0 \\ 
           \hline
       \multicolumn{11}{@{}c}{{Scenario 6. Unsafe}}\\
         Scenario & 0.50 & 0.55 & 0.60 & 0.65 & 0.70 & 0.75 & 0.80 & & & \\ 
             Optimal  & 80.53 & 16.35 & 3.10 & 0.02 & 0.00 & 0.00 & 0.00 &  \\
            WE& 13.63 & 5.53 & 2.45 & 0.88 & 0.27 & 0.06 & 0.00 & \textit{\textbf{77.2}}& 8.02 & 14.2\\ 
          CRM & 32.24 & 0.32 & 0.08 & 0.00 & 0.00 & 0.00 & 0.00 & \textit{\textbf{67.4}}& 5.33  & 10.3 \\ 
         POCRM  & 13.18 & 0.57 & 0.12 & 0.04  & 0.01  & 2.06  & 0.08 & \textit{\textbf{83.9}} &  7.12 & 12.5 \\ 
          EWOC  & 16.17 & 0.00 & 0.12 & 0.00 & 0.00 & 0.00 & 0.00 & \textit{\textbf{83.7}} & 3.07 & 6.1 \\  
          \hline
  \end{tabular}
\end{table} As expected, the designs based on the monotonicity assumption are not able to find the target treatment in non-monotonic settings. Comparing other designs, the WE design has a substantial advantage. It finds the correct arm with the probability nearly $0.28$ compared to $0.20$ for the POCRM while exposing nearly the same number of patients to toxic treatments. The safety constraint allows recommendation of the optimal treatment even in non-monotonic scenarios where the target treatment lies beyond the toxic treatments ($d_3-d_4$).

Considering Scenario 6, the WE design terminates earlier with probability $0.8$ and performs similar to the POCRM and EWOC. It outperforms the CRM which recommends a highly toxic arm with a larger probability ($32.56\%$ against $19.16\%$).  However, methods that relax the monotonicity assumption result in more toxic responses and require more patients on average to come to the termination conclusion.  While the CRM and the EWOC require $5$ and $6$ patients only, it takes nearly $14$ and $13$ patients for the WE and POCRM as they explore all arms before concluding none is safe. This, however, would not be considered as a severe drawbacks of these methods as in many application clinicians would stop the trial for safety reasons based on extensive patients profile.

Summarizing, the proposed design performs comparably to the model-based approaches in monotonic settings and clearly outperforms them in non-monotonic ones. Importantly, despite the denominator of the proposed criterion which might favour slightly more toxic arms, the design does not cause any practical concerns as soon as the allocation is restricted to the safety set. In other words, the criterion allocates the patients to the best estimated TA taking into account information about the uncertainty in the estimates and about the safety set. Importantly, the time-varying safety constraint achieves the goals motivated by the ethical concerns while not preventing the target treatment selection in safe and non-monotonic scenarios. One can conclude that the design is ethical and can be applied in practice.

 \section{Discussion\label{sec:discussion}}
In this work, we propose a family of criteria for selecting the best arm in experiments with multinomial outcomes. The novel criterion leads to accurate selection without the need for parametric or monotonicity assumptions. The fundamental property of the criterion is the infinite penalization of the bounds which was argued to be a crucial property for a parameter defined on the restricted space \citep{ait}. This property drives the allocation away from the bounds of a restricted space to the neighbourhood of the target value. The consistency conditions of the proposed design and exact rate for the special case of binary outcomes are obtained. It is shown how one can benefit from the proposed design in Phase I and Phase II clinical trials. The proposal was demonstrated to have a comparable or better characteristics than other alternatives. It preserves flexibility and allows to tailor the design parameters in light of the investigation goal. Additionally, the design is computationally simple and a large set of simulation can be performed in a feasible time.

Despite clinical trials being used as the main motivation throughout, the design can be applied to a wide range of problems of a similar nature. For example, applications where the MAB approach has found application: online advertising, portfolio design, queuing and communication networks, etc. \citep[see][and references there in]{gittinsbook}. On top of that, the proposed design can be used in more general problems of percentile estimation rather than the identification of the highest success probability. It is important to emphasize that the derived selection criterion can be also applied in conjunction with a parametric models which also expands its possible applications. In fact, the parameters can be estimated by any desirable method and then `pluged-in` in the criterion which preserves its properties.

\section*{Acknowledgments}

This project has received funding from the European Union's Horizon 2020 research and innovation programme under the Marie Sklodowska-Curie grant agreement No 633567 and, in part, from from Prof Jaki's Senior Research Fellowship (NIHR-SRF-2015-08-001) supported by the National Institute for Health Research. The views expressed in this publication are those of the authors and not necessarily those of the NHS, the National Institute for Health Research or the Department of Health.

\bibliographystyle{biorefs}
\bibliography{mybib}

\clearpage
\begin{appendices}
\section{Proofs}
\subsection*{Proof of Theorem 2.1}
\begin{proof}
The problem reduces to computing integrals of the following forms
$$
\int_{\mathbb{S}^d} \log \left(p^{(i)}\right)^{x^{(i)}}  f_n(\bp) {\rm d} \bp = x^{(i)} \left( \psi \left(x^{(i)}+1 \right) -\psi \left( n+d \right) \right), \ i=1,2,\ldots,d,
$$
$$
\int_{\mathbb{S}^d} \log  \left(p^{(i)}\right)^{x^{(i)}} \phi_n(\bp) f_n(\bp){\rm d} \bp = x^{(i)} \left( \psi \left(x^{(i)} + \gamma^{(i)} n^{\kappa}+1 \right) -\psi \left( n+ n^{\kappa} + d \right) \right), \ i=1,2,\ldots,d
$$
where $\psi(x) = \frac{{\rm d}}{{\rm d}x} {\rm log} 
\Gamma(x)$ is the digamma function. Using the asymptotics of the digamma function \citep{rg2007}, Taylor series expansion and simplifying terms, the results immediately follows.
\end{proof}
\subsection*{Proof of Theorem 2.2}
\begin{proof}
Following the proof of Theorem 2.1, one can obtain that
$h\left( f_n \right) = \log B \left( \bx+\bv + \bJ  \right)  + n \psi \left(n+d \right) - \sum_{i=1}^{d} x^{(i)} \psi \left(x^{(i)}+1 \right).$
As $n \to \infty$
$$ h\left( f_n \right) = \frac{1}{2}\log \left( 2 \pi e \right)^{d-1} + \frac{1}{2}\log \frac{\prod_{i=1}^d\alpha^{(i)}}{n^{d-1}} + O \left(\frac{1}{n} \right).$$ 
Using that
$ h\left( \widetilde{f}_n \right) =  h\left( f_n \right) + \log | det \left(\Sigma^{-1/2} \right)|$ \citep{cover2006} where $det(A)$ is a determinant of the matrix $A$ and 
$\log | det \left(\Sigma^{-1/2} \right)| = - \frac{1}{2} \log \frac{\prod_{i=1}^d\alpha^{(i)}}{n^{d-1}}+ O \left(\frac{1}{n}\right)$ as $n \to \infty$. Then,
\begin{eqnarray*}
 \mathbb{D}(\widetilde{f}_n\ ||\ \varphi)&=& - h\left(\widetilde{f}_n \right) - \int_{\mathbb{S}^d} \widetilde{f}_n(\bp) \log \varphi(\bp) {\rm d}\bp \\
&=& - \frac{1}{2}\log \left( 2 \pi e \right)^{d-1} + \frac{1}{2}\log \left( 2 \pi\right)^{d-1} + \frac{1}{2} \int_{\mathbb{S}^d} \sum_{i=1}^{d-1}\left(p^{(i)}\right)^2 \widetilde{f}_n(\bp) {\rm d} \bp + O \left( \frac{1}{n} \right)\\ &=&  O \left( \frac{1}{n} \right), 
\end{eqnarray*}
as  $\int_{\mathbb{S}^d} \sum_{i=1}^{d-1}\left(p^{(i)}\right)^2 \widetilde{f}_n(\bp) {\rm d} \bp = d-1 +O \left( \frac{1}{n} \right) $ is the sum of the second moments. Using Pinsker's inequality \citep{csiszar2011}  it implies the convergence in total variation \citep{cover2006} which implies the weak convergence. 
\end{proof}

\subsection*{Proof of Theorem 3.1}
\begin{proof}
\noindent \textbf{(a) Rule I.}
Under Rule I the proportion of observations on each arm converges to a constant (Theorem 2.3). Therefore, it is initially assumed that it is fixed and the probability measures below are conditional on the  allocation proportion.

We start from from $\kappa=1/2$ and adopt notation $\tilde{\delta}_{n_i}^{(1/2)}\equiv \tilde{\delta}_{i}$. Denoting 
$\bar{C}_{k,k+1} \equiv \{ \tilde{\delta}_{k} < \tilde{\delta}_{k+1} \} \ {\rm and} \ {C}_{k,k+1} \equiv \{ \tilde{\delta}_{k} > \tilde{\delta}_{k+1} \}$ we find that 
$$ \displaystyle \{ \nu = k \} \Leftrightarrow \{ \bar{C}_{k,1} \cap \bar{C}_{k,2} \cap \ldots \cap \bar{C}_{k,k-1} \cap  \bar{C}_{k,k+1}\cap \ldots \cap  \bar{C}_{k,m} \} \Leftrightarrow \{ \cap_{i=1,i \neq k}^{m} \bar{C}_{k,i} \}.$$

Using DeMorgan's law and Boole's inequality \citep{resnick2013}, one can obtain
\begin{equation}
\mathbb{P}(\nu = k)=1-\mathbb{P} \left( {\cup_{i=1,i \neq k}^{m}{C}_{k,i}} \right) \geq 1- \sum_{i=1, i \neq k}^{m} \mathbb{P} \left({C}_{k,i} \right)=1- \sum_{i=1, i \neq k}^{m} \mathbb{P} \left(\tilde{\delta}_{k} > \tilde{\delta}_{i} \right)
\label{prob}
\end{equation}
where
\begin{equation}
\mathbb{P}\left( \tilde{\delta}_{k} > \tilde{\delta}_{i} \right)=\mathbb{P}\left( \frac{  \tilde{\delta}_{i} - {\delta}_{i} -\tilde{\delta}_{k} + {\delta}_{k} }{\Sigma_{k,i}}  < \frac{{\delta}_{k}  - {\delta}_{i}}{\Sigma_{k,i}}  \right) \approx \Phi \left( \frac{{\delta}_{k}  - {\delta}_{i}}{\Sigma_{k,i}}  \right) 
\label{asymptotics}
\end{equation}
with $\Phi(.)$ denoting the distribution function of a standard normal random variable, $\Sigma_{k,i}~=~\left(\Sigma_k+\Sigma_{i}\right)^{1/2}$ and $\Sigma_k$ the variance corresponding to arm $k$ as in the Theorem 2.3. As arms $k$ and $i$ are independent, there are two independent random variables in the left-hand side of the second term in (\ref{asymptotics}) and each of them converges to a Gaussian random variables (Theorem 2.3). Therefore, the sum converges to a standard Gaussian random variable after an appropriate normalization. Consequently, for $j=1,\ldots,m$
\begin{equation}
 \mathbb{P}_{\pi_j} \left( \nu = j \right) \geq
1-\sum_{i=1, i \neq j}^{m}\Phi \left( \frac{\delta_{j,j} - \delta_{i,j} }{\Sigma_{j,i} }  \right)  
\label{PCS}
\end{equation}
By the construction of $\pi_j$, ${\delta}_{j,j} - {\delta}_{i,j}<0$. The number of observations on each arm $N_i$ is proportional to the total sample size $N$ under Rule I: $N_j \simeq w_jN$ and $N_j \simeq w_{j-1}N$  where $a_n \simeq b_n$ means that $\lim_{n \to \infty}\frac{a_n}{b_n}=1$. Thus, 
\begin{equation}
\frac{\delta_{m,m}  - \delta_{i,j} }{\Sigma_{m,i} } \simeq c \sqrt{N} 
\label{limit}
\end{equation}
where $c$ is a negative constant. Plugging-in terms in the accuracy formula, we obtain that
$
\lim_{N \to \infty} A_N \geq 1.
$

For $\kappa>1/2$, the probability of the final selection in the experiment is still given by (\ref{prob}) for $\kappa=1/2$ as the penalty term is not taken into account for the final recommendation. Then, the only difference is the number of observation on each arm, proportional to the total number of patients $N_j \simeq l_j(N) N$ with $l_j$ depending on $N$. This results in a different constant $c<0$ in (\ref{limit}), but in the unchanged rate $\sqrt{N}$ due to the same rate in both nominator and denominator in Equation (2.8) with respect to $N$.
\subsubsection*{Binary outcomes}
While the asymptotic result (\ref{asymptotics}) is given in terms of $\Sigma_{k,i}$, it can be written explicitly. In the special case of binary outcomes $d=2$, $\Sigma_{k,i} = \sqrt{\sigma_k^{2} + \sigma_{i}^2} $ and $
\sigma_j= |{{\delta}_{i}}^{\prime}| \sqrt{\frac{{\alpha_j(1-\alpha_j)}}{{n_j}}}$, $j=i,k$
with
$$\left.\frac{\partial{\delta}^{(\kappa)}_{i}\left(z,\gamma \right)}{\partial z}\right\vert_{z=\alpha} = {{\delta}^{(\kappa)}_{i}}^{\prime} =\frac{(\gamma-\alpha)(\gamma(2\alpha-1)-\alpha)}{\alpha^2(1-\alpha)^2}.$$
Therefore,
$$ \Phi \left( \frac{{\delta}_{k}  - {\delta}_{i}}{\sigma_{k,i}}  \right) = \Phi \left( \frac{\sqrt{n_{k}n_{i}} \left({\delta}_{k}  - {\delta}_{i} \right)}{\sqrt{n_{k} \alpha_i(1-\alpha_i)({{\delta}_{i}}^{\prime})^2+n_{i} \alpha_{k}(1-\alpha_{k})({{\delta}_{k}}^{\prime})^2}}  \right).$$
From the expression above, the rate obtained in (\ref{limit}) is explicit.\\

\noindent \textbf{(b) Rule II.}
Consider  $\kappa=\frac{1}{2}$. The design based on this measure and on its point estimate is inconsistent as it does not guarantee an infinite number of patients on all arms. We use an example with two arms only with the arm $A_1$ being the optimal. Suppose that prior parameters are specified such that $\hat{\delta}_{\beta_2} \ll \hat{\delta}_{\beta_1}$ and $\delta_2 \ll \hat{\delta}_{\beta_1}$, so $A_2$ is selected initially. While the number of observations on the arm 2 increases and the estimate $\hat{\delta}_2$ approaches the true value $\delta_2$ (Theorem 2.3), the estimate $\hat{\delta}_1$ remains unchanged. One can find prior values $\hat{\delta}_{\beta_1} \gg \delta_2$ such that $A_1$ is never  selected, because the point estimate $\hat{\delta}_2$ would not go below $\hat{\delta}_{\beta_1}$. Consequently, the selection would get stuck at the suboptimal arm regardless of further outcomes. So, the number of patients on both arms does not tend to infinity as $N \to \infty$.

For $\frac{1}{2} < \kappa <1$, let $N_j^k$ be the indicator function such that
$$N_j(t)= \begin{cases}
1 \ {\rm with \ probability} \ \mathbb{P}(\tilde{\delta}_{j}^{(\kappa)} (t)=\min_i \tilde{\delta}_{i}^{(\kappa)}(t)) \\
0 \ {\rm with \ probability} \ 1-\mathbb{P}(\tilde{\delta}_{j}^{(\kappa)} (t)=\min_i \tilde{\delta}_{i}^{(\kappa)}(t)) 
\end{cases}$$
where $\tilde{\delta}_{j}^{(\kappa)} (t)$ is a random variable corresponding to the posterior density function after $t$ observations in the experiment.
Let $n_j(t)=\sum_{i=1}^{t}N_j(i)$ be the number of observations on arm $j$ up to the moment $t$. We then obtain
$$\mathbb{E}(n_j(t)) = \sum_{u=1}^{t} \mathbb{E} N_j(u)=\sum_{u=1}^{t} \mathbb{P}(\tilde{\delta}_{j}^{(\kappa)} (u)=\min_i \tilde{\delta}_{i}^{(\kappa)}(u)).$$
Note, that $\mathbb{P}(\tilde{\delta}_{j}^{(\kappa)} (u)=\min_i \tilde{\delta}_{i}^{(\kappa)}(u))$ has already been studied in (a). The mean of $\tilde{\delta}_{j}^{(\kappa)} (u)$ associated with an arm $A_j$ to be selected is an increasing polynomial with respect to  $N$. The probability to be the minimum decreases for $j$ and increases for $i=1,\ldots,j-1,j+1,\ldots,m$.
It follows that the probability of being selected is not a monotonic function and
$$\lim_{t \to \infty} \sum_{u=1}^{t} \mathbb{P}(\tilde{\delta}_{j}^{(\kappa)} (u)=\min_i \tilde{\delta}_{i}^{(\kappa)}(u))=\infty.$$
The final selection is the arm satisfying (2.9). Consequently, the number of observations on each arm tends to infinity and we obtain that $\lim_{N \to \infty}A_N=1$ using the arguments of (a).
\end{proof}

\section{Parameters calibration for the Phase I clinical trial}
\label{app2}
\subsection{Operational prior}
A prior for treatment $d_j$  can be specified through the mode of the prior distribution, $\hat{p}_{\beta_j} = \frac{\nu_j}{\beta_j}$. To guarantee the procedure to start from $d_1$ we set $\hat{p}_{\beta_1}  = \gamma$. As an investigator has the same amount of knowledge about each treatment we set $\beta_1=\ldots=\beta_7=\beta$.  Larger values of $\beta$ and the rate of increase correspond to the more conservative escalation scheme as an investigator needs more observations on each particular treatment to escalate. Similarly, a greater differences in prior toxicity probabilities would correspond to the more conservative scheme as well, because it would require more evidence to escalate. Therefore, one can expect a set prior parameters that would lead to a similar PCS. The investigation of these parameter influence on the operational characteristics of the proposed approach is given below. 

We consider six different scenarios with different location of the target (Figure \ref{fig:oper_prior}). For simplicity, we would assume a priori that toxicity increases linearly between treatments. Given $m=7$, we set the difference between prior toxicities on the $d_1$ and $d_7$ and, then, interpolate the linear curve for the rest. We would define $step=\hat{p}_{\beta_7}-\hat{p}_{\beta_1}$. Then, we vary values of $step$ and $\beta$ for each scenarios. The PCS for different combination of $step$ and $\beta$ is given in the Figure \ref{fig:oper_prior}.

\begin{figure}[ht!]
  \centering
    \includegraphics[width=1\textwidth]{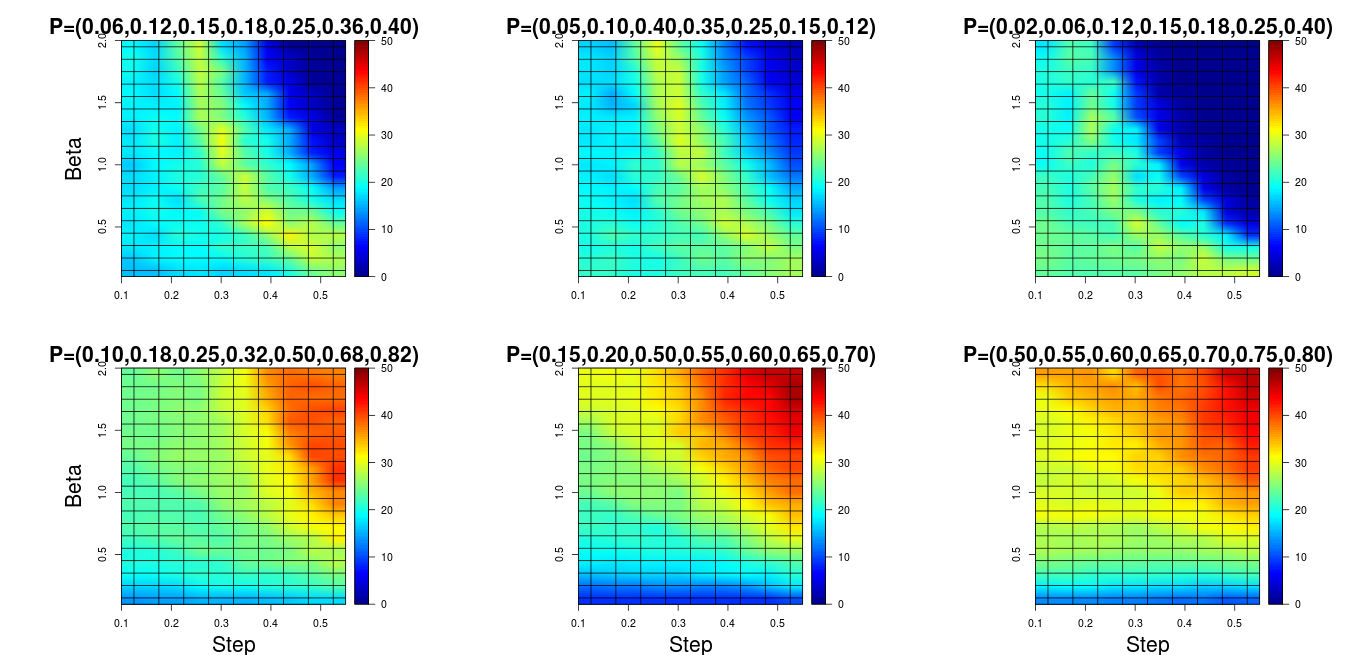}
      \caption{The PCS for the WE design using $N=20$ and different combinations of $\beta$ (vertical axis) and $step$ (horizontal axis). Results are based on $10^6$ simulations. \label{fig:oper_prior}}
\end{figure}

Brighter colours correspond to higher values of the PCS. A conservative prior (top right corner on the grid) prevents the WE design from the correct recommendation in upper line graphs scenarios as higher doses can be hardly reached with $N=20$. At the same time, it leads to an accurate selection in scenarios with highly toxic doses (lower line scenarios). In contrast, less conservative prior results in higher proportion of correct recommendation in upper line scenarios and worse in lower line ones. Therefore, there is a trade-off between the ability to investigate higher doses and the desire to prevent the high number of toxic responses. Therefore, the geometric mean of the PCS over all scenarios is chosen as the criterion for the operational prior choice. The geometric mean for different set of parameters is given in the Figure \ref{fig:geom_mean}.
 
 \begin{figure}[!ht]
\centering
\makebox{\includegraphics[width=0.6\textwidth]{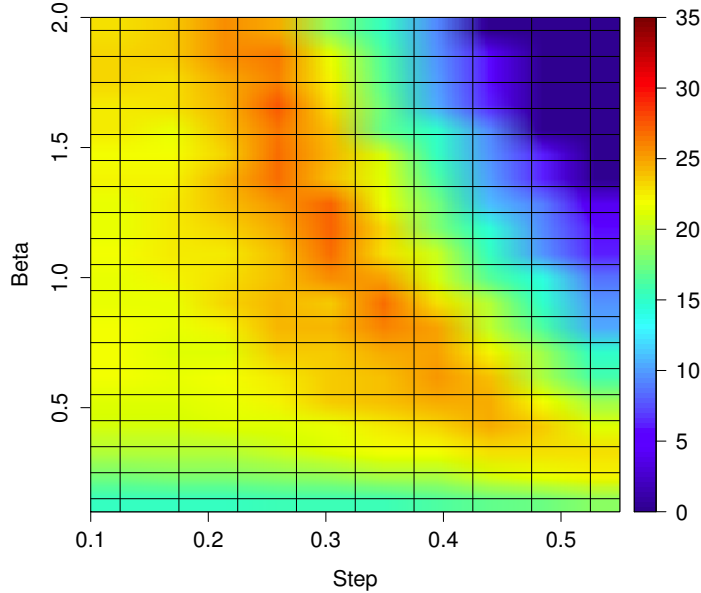}}
  \caption{\label{fig:geom_mean}The geometric mean of the proportion of correct recommendations by the proposed WE method using different set of prior parameters: $\beta$ (vertical axis) and the difference between the risk of toxicity on the lowest and highest dose (horizontal axis) in six scenarios: with the TD at the bottom, in the middle and at the top of the investigated dose range. $10^6$ simulations are used.}
\end{figure}

There is a set of the prior parameter that lead to the same geometric mean of the PCS across all scenarios. We choose a prior that carries limited information $\beta=1$ and fix the rate to maximize the geometric mean among all scenarios. Thus, the following vector of modes $\hat{\textbf{p}}$ is chosen $$\hat{\textbf{p}} = [0.25,0.3,0.35,0.4,0.45,0.5,0.55]^{\rm T}.$$ 

We also consider the case of $N=25$ thought the same set of scenarios to illustrate the influence of the prior parameter for a larger sample size (Figure \ref{n25}). There is a a similar dependence pattern on $\beta$ and $step$. However, the set of the equivalent operational prior parameters is now wider that means the importance of the prior distribution decreases with the sample size as one would expect.
   \begin{figure}[!ht]
  \centering
    \includegraphics[width=0.7\textwidth]{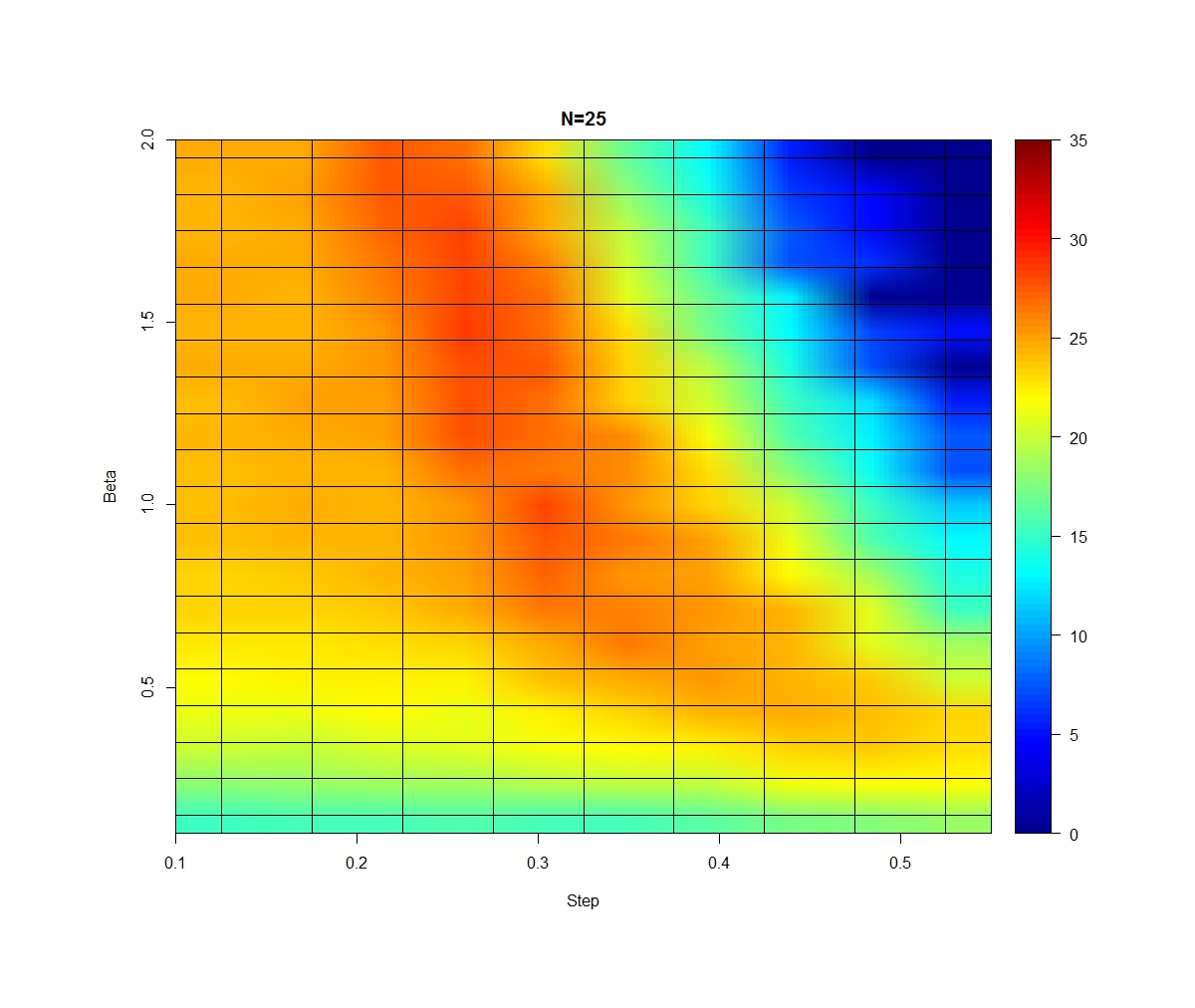}
         \caption{The geometric mean of the proportion of correct recommendations by the proposed WE method using different set of prior parameters: $\beta$ (vertical axis) and the difference between the risk of toxicity on the lowest and highest dose (horizontal axis) in six scenarios. Total sample size $N=25$. $10^6$ simulations are used.}
         \label{n25}
\end{figure}

\subsection{Safety constraint}
Parameters $r$ and $\gamma^*$ determine the strictness of the safety constraint. Greater values of $r$ and smaller values of $\gamma^*$ would lead to more conservative escalation. It helps to avoid high risk in unsafe scenarios, but also to prevent the correct recommendation in flat safe scenarios. There is a clear trade-off in the choice of these parameters that is precisely studied below.

Let us consider two extreme scenarios: a linear flat dose response shape with the target treatment far from the bottom ($d_5$) and the scenario with no safe treatment at all. The operating characteristics of the proposed method in these two scenarios with different parameters of the safety constraint $\gamma^*$ and $r$ are given in Table \ref{tab:tradeoff}. 

\begin{table}[ht!]
\caption{\textit{The operating characteristics of the proposed design in a linear and an unsafe scenario for different parameters of the safety constraint. The figures in the upper line of each cell corresponds to termination proportion in the unsafe scenario. The lower cell entries corresponds to the PCS in the flat linear scenario. The bold figures correspond to $\gamma^*=0.45$ and $r=0.035$ which were used in simulations. Results based on $10^6$ replications.\label{tab:tradeoff}}}
\centering
        \begin{tabular}{ccccccccc}
                      & \multicolumn{8}{c}{$r$} \\
            \cline{2-9}
             & $0.010$ & $0.015$ & $0.020$ & $0.025$ & $0.030$ & $0.035$ & $0.040$ & $0.045$ \\
            \hline
                        \multirow{2}{*}{$\gamma^*=0.55$}&  0.00 & 0.32 & 4.32 & 18.47 & 36.15 & 49.06 & 61.49 & 75.70 \\
                & 26.47 & 26.65 & 26.40 & 26.05 & 26.85 & 25.03 & 24.10 & 20.23\\
            \hline
             \multirow{2}{*}{$\gamma^*=0.50$}&0.15 & 2.50 & 17.76 & 38.75 & 52.74 & 63.06 & 74.94 & 87.22  \\
    & 26.27 & 26.22 & 26.53 & 27.24 & 25.46 & 23.30 & 20.35 & 17.10  \\
            \hline
             \multirow{2}{*}{$\gamma^*=0.45$}&1.13 & 12.72 & 35.72 & 56.49 & 67.16 & \textbf{77.55} & 86.53 & 93.49 \\
    & 26.15 & 26.02 & 26.81 & 25.18 & 24.26 & \textbf{23.15} & 18.16 & 11.05 \\
            \hline
             \multirow{2}{*}{$\gamma^*=0.40$}&  7.47 & 37.95 & 59.49 & 70.52 & 80.53 & 88.32 & 94.18 & 97.63 \\
    & 26.04 & 25.91 & 24.90 & 21.98 & 17.66 & 17.47 & 11.05 & 3.51 \\
       \hline
         \multirow{2}{*}{$\gamma^*=0.35$}& 33.98 & 58.22 & 74.42 & 84.14 & 90.52 & 94.86 & 97.90 & 99.20 \\
    &  25.65 & 24.54 & 20.45 & 15.55 & 13.77 & 9.21 & 6.25 & 0.70  \\
       \hline
        \multirow{2}{*}{$\gamma^*=0.30$}&    55.51 & 77.02 & 87.21 & 92.99 & 96.50 & 98.55 & 99.37 & 99.83 \\
    & 24.21 & 18.09 & 14.40 & 11.42 & 7.13 & 0.95 & 0.08 & 0.04 \\
       \hline
        \end{tabular}
\end{table}

The upper line in each cell corresponds to the proportion of times the proposed method declares that there is no safe dose when there is actually no safe dose. The lower line corresponds to the proportion of trials the actual TD25 was recommended in a linear scenario. The most relaxed safety constraint corresponds to the left upper corner. In this case no trials are terminated in a highly toxic scenario and the proportion of times the TD25 is recommended in the linear scenario is high. The right lower corner corresponds to the strictest safety constraint. In this case near all trials will be terminated when there is no safe dose, but the 
method will often not find the TD25 in the linear scenario. Therefore, the trade-off is to sacrifice the accuracy of the method when the TD25 is far from the bottom in order to prevent the recommendation of highly toxic dose in unsafe scenario.

\end{appendices}

\end{document}